\newcommand{\sign}{\mathop{\rm sign}}
\newcommand*{\mailto}[1]{\href{mailto:#1}{\nolinkurl{#1}}}
\DeclareMathOperator{\id}{Id}
\DeclareMathOperator{\meas}{meas}
\DeclareMathOperator{\supp}{supp}
\newcommand{\dott}{\, \cdot\,}
\newcommand{\Gr}{G}
\newcommand{\D}{\ensuremath{\mathcal{D}}}
\newcommand{\F}{\ensuremath{\mathcal{F}}}
\newcommand{\abs}[1]{\left\vert#1\right\vert}
\newcommand{\Real}{\mathbb R}
\newcommand{\Natural}{\mathbb N}
\newcommand{\norm}[1]{\left\lVert#1\right\rVert}
\newcommand{\bnorm}[1]{\bigl\lVert#1\bigr\rVert}
\newcommand{\Ltwo}{{L^2(\Real)}}
\newcommand{\Linf}{{L^\infty(\Real)}}
\newcommand{\muac}{\mu_{\text{\rm ac}}}
\newcommand{\nn}{\nonumber}
\newcommand{\dx}{\,dx}
\newtheorem{theorem}{Theorem}[section]
\newtheorem{definition}[theorem]{Definition}
\newtheorem{remark}[theorem]{Remark}
\numberwithin{equation}{section}
\begin{document}

\title[The 2CH system]{On the equivalence of Eulerian and Lagrangian variables for the  two-component Camassa--Holm system}

\author[M. Grasmair]{Markus Grasmair}
\address{Department of Mathematical Sciences\\ NTNU Norwegian University of Science and Technology\\ NO-7491 Trondheim\\ Norway}
\email{\mailto{markus.grasmair@ntnu.no}}

\author[K. Grunert]{Katrin Grunert}
\address{Department of Mathematical Sciences\\ NTNU Norwegian University of Science and Technology\\ NO-7491 Trondheim\\ Norway}
\email{\mailto{katrin.grunert@ntnu.no}}
\urladdr{\url{http://www.math.ntnu.no/~katring/}}

\author[H. Holden]{Helge Holden}
\address{Department of Mathematical Sciences\\ NTNU 
  Norwegian University of Science and Technology\\
  NO-7491 Trondheim\\ Norway}
\email{\mailto{helge.holden@ntnu.no}}
\urladdr{\url{http://www.math.ntnu.no/~holden/}}

\date{\today} 
\thanks{Research supported in part by the Research Council of Norway projects NoPiMa and WaNP, and by the Austrian Science Fund (FWF) under Grant No.~J3147.  KG and HH are grateful to Institut Mittag-Leffler, Stockholm, for the generous hospitality during the fall of 2016, when part of this paper was written.}  
\subjclass[2010]{Primary: 35Q53, 35B35; Secondary: 35Q20}
\keywords{Camassa--Holm equation, conservative solutions, Lagrangian variables}

\begin{abstract}
The  Camassa--Holm equation and its two-component Camassa--Holm system generalization both experience wave breaking in finite time. To analyze this, and to obtain solutions past wave breaking, it is common to reformulate the original equation given in Eulerian coordinates, into a system of ordinary differential equations in Lagrangian coordinates. It is of considerable interest to study the stability of solutions and how this is manifested in Eulerian and Lagrangian variables.  We identify criteria of convergence, such that convergence in Eulerian coordinates is equivalent to convergence in Lagrangian coordinates. In addition, we show how one can approximate global conservative solutions of the scalar Camassa--Holm equation by smooth solutions of the two-component Camassa--Holm system that do not experience wave breaking.  
\end{abstract}

\maketitle

\section{Introduction}

The prevalent  way to analyze the ubiquitous wave breaking for the  Camassa--Holm (CH) equation, is to transform the original equation from its Eulerian variables into a new coordinate system, e.g.~in Lagrangian variables. The reason for the transformation is that while the solution develops 
singularities  in Eulerian coordinates, the solution remains smooth in the Lagrangian framework.  This invites the question of a closer analysis of the transformation between the Eulerian and the Lagrangian variables.  That is the goal of the present paper.

A  two-component generalization of the CH equation was introduced in \cite[Eq.~(43)]{OlverRosenau}, and we will study the above question in this setting. It turns out that this system, denoted the two-component Camassa--Holm (2CH) system, has a regularizing effect on the original CH equation as long as the density $\rho$ remains positive.   To set the stage, we recall that the 2CH system can be written as
\begin{subequations}
  \label{eq:rewchsys10}
  \begin{align}
    \label{eq:rewchsys11}
    u_t+uu_x+P_x&=0,\\
    \label{eq:rewchsys12}
    \rho_t+(u\rho)_x&=0,
  \end{align}
\end{subequations}
where $P$ is implicitly defined by
\begin{equation}
  \label{eq:rewchsys13}
  P-P_{xx}=u^2+\frac12u_x^2+\frac12\rho^2.
\end{equation}
The original CH equation \cite{CH,CHH} is the special case where $\rho$ vanishes identically. The CH equation possesses many intriguing properties, and the main challenge when one considers the Cauchy problem, is that the solution develops singularities in finite time, independent of the smoothness initially.    This singularity is characterized by the $H^1$-norm of the function $u$ remaining finite, while the spatial derivative $u_x$ goes to negative infinity at a specific point at the time of wave breaking. The structure of the points of wave breaking may be intricate \cite{grunert}.  The behavior in the proximity of the point of wave breaking, and, in particular, the prolongation of the solution past wave breaking, has been extensively studied.  See, e.g.,  
\cite{BC,BreCons:05a,ChenLiuZhang,ChenLiu,cons:98b,EscherLechtenfeldYin,FuQu,GHR2,GHR3,GHR5,GHR4,GHR,GuanKarlsenYin,GY, GuanYin2011, GuiLiu2010, GuiLiu2011, GuoZhou2010, HR,HRdiss,HRdissMP} and references therein.  The key point here is that past wave breaking uniqueness fails, and there is a continuum of distinct solutions \cite{GHR}, with two extreme solutions called dissipative and conservation solutions, respectively.  The various solutions can be characterized by the behavior of the total energy, as measured by the local $H^1$ density of the solution $u$. As mentioned above, the density $\rho$ has a regularizing effect on the solution: If $\rho$  is positive on the line initially, then the solution will not develop singularities \cite{consIvan,GHR4}.  A local result, saying that if  $\rho$ initially is smooth on an interval, then the solution will remain smooth on the interval determined by the characteristics emanating from the endpoints of the original interval, can be found in  \cite[Thm.~6.1]{GHR4}. This is surprising, as  the 2CH system has infinite speed of propagation \cite{henry}.

In this paper we study in detail the relation between the Eulerian and the Lagrangian variables, and, in particular, the stability of solutions in the two coordinate systems. Two aspects are considered. First one may ask if the solution of the 2CH system will converge to a solution of the CH equation in the limit when the density $\rho$ vanishes, and if so, to which of the plethora of solutions. This problem has also been studied in \cite{GHR4}. We show that the limit is the so-called conservative solution of the CH equation where the energy is preserved, see Theorem \ref{thm:timesmoothapprox}.  
The second question addresses the relation between stability in Eulerian variables and stability in Lagrangian variables in general.  The short answer is that the two notions are equivalent.  
This result can hardly be considered surprising.  However, as each of the norms for  the variables is rather intricate, and the relation between them is highly nonlinear, the actual proofs are considerably more technical than we expected.  In part, this is due to the fact that the solution does develop singularities in Eulerian coordinates, while it remains smooth in the Lagrangian framework. We have chosen to give rather detailed proofs, as we find that eases the understanding.  Each proof is broken down into  shorter technical arguments for the benefit of the reader.

Let us describe more precisely the content of this paper. A key role is played by the non-negative Radon measure $\mu$ with absolutely continuous part $\muac=(u_x^2+\bar\rho^2)dx$. Here $\rho-\bar\rho$ is a real constant, and $\bar\rho$ is square integrable. The dynamics between the singular and absolutely continuous part of the measure encode the wave breaking. In Section \ref{sec:2} we consider the Cauchy problem for the CH equation with initial data $(u,\mu)$.  
We mollify these data to obtain a sequence $(u_n,\rho_n,\mu_n)$ with positive density $\rho_n$. The main result in this section, 
Theorem~\ref{thm:approxEuler}, shows that indeed  $u_n\to u$ in $H^1$ while $\rho_n\rightharpoonup 0$,  and $\mu_n((-\infty,x]) \to \mu((-\infty,x])$ at points of continuity of the limit.  In Theorem~\ref{thm:timesmoothapprox} we prove that the same result applies to the solution of the initial value problem. More specifically, we show (in obvious notation) that the solution $(u_n(t),\rho_n(t),\mu_n(t))$ of \eqref{eq:rewchsys10} with initial data  $(u_{0,n},\rho_{0,n},\mu_{0,n})$ will converge to the conservative solution $(u(t),\mu(t))$ with initial data $(u_0,\mu_0)$.
In Section \ref{sec:3} we study how this approximation by a mollification procedure carries over in Lagrangian coordinates. To detail this, we first need to recall the transformation between Eulerian and Lagrangian variables. We are given the pair of functions 
$(u,\rho)\in H^1\times L^2$ (Eulerian variables). For simplicity we let $\rho=\bar\rho$. In addition, we need the energy density in the form of a positive Radon measure $\mu$, that was introduced above, such that the absolutely continuous part equals 
$\muac=(u_x^2+\rho^2) \dx$.  The characteristic is given by $y(\xi)=\sup\{y \mid \mu((-\infty,y))+y<\xi\}$. The Lagrangian velocity,  energy density, and density read $U=u(y)$,  $h=1-y_\xi$, and  $r=\rho(y)y_\xi$, respectively. The full set of Lagrangian variables is then $X=(y,U,h,r)$.  We write  $X=L((u,\rho,\mu))$, and $(u,\rho,\mu)=M(X)$.  There is a lack of uniqueness in this transformation, corresponding to the fact that a particle trajectory can be parametrized in several distinct ways. In our context we denote this by \textit{relabeling}.  Thus $M\circ L=\id$, while $L\circ M$ is only the identity on the equivalence classes of Lagrangian functions that correspond to one and the same Eulerian solution, see \cite[Thm.~3.12]{HR}.  We prove that the convergence   
 $(u_n,\rho_n,\mu_n)\to (u,0,\mu)$ implies that $X_n\to X$ (in obvious notation)  in the appropriate norm, see Theorem~\ref{thm:approxLagran}.  The proof is surprisingly intricate and applies the notion of relabeling. 
 
The situation is turned around in  Section \ref{sec:4}, where we consider an arbitrary sequence of Lagrangian coordinates  $X_n$ that converges to $X$, thus $X_n \to X$ in an appropriate norm. It is then shown that the corresponding Eulerian variables   $(u_n,\rho_n,\mu_n)$ converge to $(u,\rho, \mu)$, see 
Theorem \ref{thm:LagEuler}.  In Section \ref{sec:5} we study how general convergence in Eulerian coordinates carries over to Lagrangian variables. To be more specific, consider a sequence $(u_n,\rho_n,\mu_n)$ that converges to $(u,\rho, \mu)$. Then we show in Theorem~\ref{thm:EulerLag} that the corresponding Lagrangian coordinates converge. Here it is not assumed that the sequence $(u_n,\rho_n,\mu_n)$ is a mollification of $(u,\rho, \mu)$.
Finally, in Section \ref{sec:6} we consider the time-dependent case. Consider a sequence of initial data  $(u_{n,0},\rho_{n,0},\mu_{n,0})$ that converges to $(u_0,\rho_0,\mu_0)$ in $\D$.  In Theorem \ref{thm:timegeneralapprox} it is shown that the corresponding solutions converge for each fixed positive time.  The proof transfers the convergence issue from Eulerian variables to Lagrangian coordinates, analyzes it in these variables, and finally translates the result back to the original variables.

\section{Approximation in Eulerian coordinates} \label{sec:2}

The aim of this section is to show that any initial data $(u,0,\mu)$ of the CH equation can be approximated by a sequence of smooth initial data $(u_{n},\rho_{n},\mu_{n})$ of the 2CH system. We start by introducing the Banach spaces needed in this context, before recalling the definition of the set of Eulerian coordinates for the 2CH system (and hence also for the CH equation). Thereafter we state and prove the approximation theorem.

Let 
\begin{equation}
L^2_{\rm const}(\Real)= \{\rho\in L^1_{\rm loc}(\Real) \mid \rho(x)=\bar \rho(x)+k, \, \bar \rho\in L^2(\Real),  k\in \Real\}.\label{eq:Lconst} 
\end{equation}
Then we can associate to any $\rho\in L^2_{\rm const}(\Real)$ the unique pair $(\bar\rho, k)\in L^2(\Real)\times\Real$. Thus, if we equip $L^2_{\rm const}(\Real)$ with the norm 
\begin{equation}\label{normL2inf}
 \norm{\rho}_{L^2_{\rm const}}=\norm{\bar \rho}_{L^2}+\vert k\vert,
\end{equation}
then $L^2_{\rm const}(\Real)$ is a Banach space.   

We are now ready to introduce the set of Eulerian coordinates of the 2CH system (and hence also of the CH equation). The case of the CH equation corresponds to $\rho(x)=0$  for all $x\in\Real$. 

\begin{definition}[Eulerian coordinates]
 The set $\D$ is composed of all triples $(u,\rho,\mu)$ such that $u\in H^1(\Real)$, $\rho\in L^2_{\rm const}(\Real)$, and $\mu$ is a positive finite Radon measure whose absolutely continuous part $\mu_{\rm ac}$ satisfies
\begin{equation*}
 \mu_{\rm ac}=(u_x^2+\bar \rho^2)dx.
\end{equation*}
We write $F(x)=\mu((-\infty,x])$.
\end{definition}

 We will need a standard \textit{Friedrichs mollifier} $\phi\in C^\infty_c(\Real)$, 
chosen in such a way that $\phi(x)=\phi(-x)\geq 0$, $\norm{\phi}_{L^1}=1$, $\phi'(x)>0$ for $x\in(-1,0)$, and $\supp (\phi)=[-1,1]$. 

\begin{theorem}\label{thm:approxEuler}
Given $(u,0,\mu)\in\D$, let $(u_n,\rho_n,\mu_n)$ be given through
\begin{subequations}\label{smoothsequence}
\begin{align}\label{1:eq}
 u_n(x)& =\int_\Real n\phi(n(x-y))u(y)dy,\\ 
 \rho_n(x)&=\Big(\frac1{n^2}+\int_\Real n\phi'(y)F(x-\frac{y}{n})dy-\big(\int_\Real \phi(y)u_x(x-\frac{y}{n})dy\big)^2\Big)^{1/2}  \label{1:eqB}\\
                &= \frac1n +\bar\rho_n(x), \notag\\
 \mu_n(x)&=u_{n,x}^2(x)+\bar\rho_n^2(x). \label{1:eqC}
 \end{align}
 \end{subequations}
 Define moreover
 \begin{equation*}
   F_n(x)=\mu_n((-\infty,x]).
 \end{equation*}
 Then $(u_n,\rho_n,\mu_n)\in\D$ is a sequence of smooth functions,
 which approximates $(u,0,\mu)$ in the following sense:
 \begin{align*}
 u_n\to u &\quad \text{ in } H^1(\Real),\\
 F_n(x)\to F(x) &\quad \text{ for every } x \text{ at which } F \text{ is continuous.}
 \end{align*}
\end{theorem}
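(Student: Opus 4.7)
The plan is to verify admissibility $(u_n,\rho_n,\mu_n)\in\D$, and then establish the two convergence statements separately.

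I would first rewrite the formulas in \eqref{smoothsequence} in more transparent form. Setting $\phi_n(x)=n\phi(nx)$, a substitution together with integration by parts (using $F'=\mu$ as a distribution) yields $\int_\Real n\phi'(y)F(x-y/n)\,dy = (\phi_n*\mu)(x)$, while $\int_\Real \phi(y)u_x(x-y/n)\,dy = (\phi_n*u_x)(x) = u_{n,x}(x)$, so
\begin{equation*}
\rho_n^2 = \tfrac{1}{n^2} + \phi_n*\mu - u_{n,x}^2.
\end{equation*}
Because $\mu_{\rm ac}=u_x^2\,dx$ gives $\mu\geq u_x^2\,dx$ as measures, we have $\phi_n*\mu \geq \phi_n*u_x^2$, and Cauchy--Schwarz with $\int\phi_n=1$ gives $(\phi_n*u_x)^2\leq \phi_n*u_x^2$. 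Hence $\rho_n^2\geq 1/n^2>0$, so $\rho_n$ is smooth and positive with $\bar\rho_n=\rho_n-1/n\geq 0$, and the bound $\bar\rho_n^2 \leq \phi_n*\mu - u_{n,x}^2 \leq \phi_n*\mu$ gives $\norm{\bar\rho_n}_{L^2}^2 \leq \mu(\Real)<\infty$. Thus $\rho_n\in L^2_{\rm const}(\Real)$ with constant $1/n$, and since $\mu_n$ is absolutely continuous by construction, the consistency condition in $\D$ is automatic. The convergence $u_n\to u$ in $H^1(\Real)$ is then standard mollifier theory.

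For the pointwise convergence of $F_n$, I would expand $(\bar\rho_n + 1/n)^2 = \rho_n^2$ to obtain the key identity $u_{n,x}^2 + \bar\rho_n^2 = \phi_n*\mu - (2/n)\bar\rho_n$, which on integration gives
\begin{equation*}
F_n(x) = \int_{-\infty}^x (\phi_n*\mu)(y)\,dy - \tfrac{2}{n}\int_{-\infty}^x \bar\rho_n(y)\,dy.
\end{equation*}
Writing $\Phi(t)=\int_{-\infty}^t \phi(s)\,ds$, Fubini rewrites the first integral as $\int_\Real \Phi(n(x-z))\,d\mu(z)$, and dominated convergence identifies the limit as $\mu((-\infty,x)) + \tfrac12\mu(\{x\})$, which equals $F(x)$ at continuity points of $F$.

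The main technical obstacle is to show that the second integral tends to $0$. The naive pointwise estimate $(2/n)\bar\rho_n\leq \phi_n*\mu - u_{n,x}^2$ (obtained by dropping $\bar\rho_n^2\geq 0$ from the identity) only yields a limit bounded by $\mu_s((-\infty,x])$, which can be strictly positive and is therefore not tight enough when $\mu$ has a singular part. To close the gap I would split the integral at $-R$, with $R>|x|$ large and $-R$ chosen to be a continuity point of $F$ (possible as $F$ has at most countably many jumps). On the bounded piece $[-R,x]$, Cauchy--Schwarz combined with $\norm{\bar\rho_n}_{L^2}\leq\sqrt{\mu(\Real)}$ gives $\int_{-R}^x\bar\rho_n\,dy\leq\sqrt{(x+R)\mu(\Real)}$, so the prefactor $2/n$ forces this contribution to $0$ as $n\to\infty$ with $R$ fixed. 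On the tail $(-\infty,-R]$, the naive bound and the same Fubini argument as above give $(2/n)\int_{-\infty}^{-R}\bar\rho_n\,dy \leq \int_{-\infty}^{-R}\phi_n*\mu\,dy \to F(-R)$, which is arbitrarily small for large $R$ since $F(-\infty)=0$. Letting first $n\to\infty$ and then $R\to\infty$ through continuity points completes the proof.
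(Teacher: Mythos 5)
Your proposal is correct, and its admissibility half (positivity of $\rho_n^2$ via Cauchy--Schwarz together with $\mu\ge u_x^2\,dx$, the identity $\bar\rho_n^2+\tfrac2n\bar\rho_n=\phi_n*\mu-u_{n,x}^2$ with $\phi_n(x)=n\phi(nx)$, and the uniform bound $\norm{\bar\rho_n}_{L^2}^2\le\norm{\mu}$) is essentially the paper's Steps 2--3. Where you genuinely deviate is in how the convergence $F_n(x)\to F(x)$ is concluded: the paper proves $\int\psi\,d\mu_n\to\int\psi\,d\mu$ for $\psi\in C_c^\infty(\Real)$, killing the error term in one line by $\tfrac2n\abs{\int\psi\bar\rho_n\,dx}\le\tfrac2n\norm{\psi}_{L^2}\norm{\mu}^{1/2}$, and then invokes the cited equivalence (Folland) between this weak convergence and pointwise convergence of the distribution functions at continuity points; you instead work with the distribution function directly, identifying the limit of the mollified part as $\int_\Real\Phi(n(x-z))\,d\mu(z)\to\mu((-\infty,x))+\tfrac12\mu(\{x\})$ by dominated convergence, and disposing of $\tfrac2n\int_{-\infty}^x\bar\rho_n\,dy$ by splitting at a continuity point $-R$: Cauchy--Schwarz with the uniform $L^2$ bound on $[-R,x]$ (the analogue of the paper's one-line estimate, with $\psi$ replaced by an indicator), and the crude bound $\tfrac2n\bar\rho_n\le\phi_n*\mu$ on the tail, whose integral tends to $F(-R)$, which is small for $R$ large. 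The price is the extra tail argument, unavoidable since $\mathbf{1}_{(-\infty,x]}$ is not compactly supported; the gain is a self-contained argument that does not lean on the quoted measure-theoretic equivalence and that, in effect, also verifies the ``no mass escapes to $-\infty$'' condition one needs when passing from vague convergence to convergence of distribution functions. Your remark that the naive bound $\tfrac2n\bar\rho_n\le\phi_n*\mu-u_{n,x}^2$ by itself only limits the error by $\mus((-\infty,x])$ is accurate and is precisely what makes the splitting necessary.
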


\begin{proof}
We split the proof into several steps.

{\bf Step 1.  Approximation of $u$ by smooth functions $u_n$.} 
By assumption we have $u\in H^1(\Real)$. Thus, application of Minkowski's
inequality for integrals and the dominated convergence theorem yield
that $u_n$ defined in \eqref{1:eq} converges to $u$ in
$H^1(\Real)$. Moreover, the smoothness of $\phi$ implies that $u_n \in
C^\infty(\Real)$.

{\bf Step 2. Construction of some auxiliary functions and measures.}

We start by defining the auxiliary function
\begin{equation}\label{def:hatFn}
\hat F_{n}(x)= \int_\Real n\phi(n(x-y))F(y)dy.
\end{equation}
Then $\hat F_n$ is smooth and converges pointwise to $F$ at
every point $x$ at which $F$ is continuous. Now recall that $F(x) =
\mu((-\infty,x])$ and denote by $\mu_{\rm d}$ the purely discrete part
of the finite Radon measure $\mu$. Then $\mu_{\rm d}$ can be written
as an at most countable sum of Dirac measures, the positions of which
coincide with the set of discontinuities of $F$. In particular, $F$ is
continuous almost everywhere, and thus $\hat{F}_n$ converges to $F$
pointwise almost everywhere.
Define moreover
\begin{equation}\label{def:hatmun}
\hat\mu_{n}(x)=\int_\Real n^2\phi'(n(x-y))F(y)dy=\int_\Real n\phi(n(x-y))d\mu(y) \ge 0. 
\end{equation}
Then we obtain by Fubini's theorem that $\norm{\hat
  \mu_n}_{L^1}=\norm{\mu}$ for all $n\in\Natural$.

As a next step, we will associate a sequence of densities $\hat\rho_n$ to  $(u_n, \hat\mu_n)$. To that end, we note, using the
Cauchy--Schwarz inequality and the fact that $\norm{\phi}_{L^1}  =1$,
that 
\begin{align*}
 \int_\Real \phi(z)&u_x^2\Bigl(x-\frac{z}{n}\Bigr)dz -\left(\int_\Real \phi(z) u_x\Bigl(x-\frac{z}{n}\Bigr) dz\right)^2\\ \nn 
& \geq \int_\Real \phi(z)u_x^2\Bigl(x-\frac{z}{n}\Bigr)dz-\left(\int_\Real \phi(z)dz\right)\left(\int_\Real \phi(z)u_x^2\Bigl(x-\frac{z}{n}\Bigr)dz\right)=0.
\end{align*}
As a consequence, as $\mu$ is a positive Radon measure and $\mu_{\rm
  ac}=u_x^2dx$, we see that
\begin{align*}
  \hat{\mu}_n(x)
  &= \int_\Real n\phi(n(x-y))d\mu(y)
  \ge \int_\Real n\phi(n(x-y))d\mu_{\rm ac}(y)\\
 & = \int_\Real n\phi(n(x-y)) u_x^2(x) dx
  \ge \Bigl(\int_\Real n\phi(n(x-y)) u_x(x) dx\Bigr)^2,
\end{align*}
and we may define $\hat{\rho}_n$ to be the non-negative root of
\begin{equation}\label{def:helpmeasure}
 \hat\rho_n^2(x)=\hat\mu_n(x)-u_{n,x}^2(x).
\end{equation}
Note that by construction $\hat{\rho}_n^2 \in L^1(\Real)$ and
$\hat{\rho}_n^2 \in C^\infty(\Real)$. The function $\hat{\rho}_n$ itself
need not be smooth, though.

{\bf Step 3. Smooth, approximating sequences $\rho_n$ and $\mu_n$.}

Let $\rho_n$ be defined by \eqref{1:eqB}, then 
\begin{equation}\label{2:eq}
\rho_n^2(x)=\hat\rho_n^2(x)+\frac{1}{n^{2}}.
\end{equation}
In particular, $\rho_n$ is well-defined, since the term within the square root is always positive.
Furthermore, we can decompose $\rho_n$ as
\[
\rho_n(x) = \frac{1}{n} + \bar{\rho}_n(x).
\]
Then
\[
\bar{\rho}_n(x) = -\frac{1}{n} +
\Bigl(\frac{1}{n^2}+\hat{\rho}_n^2(x)\Bigr)^{1/2} \ge 0,
\]
where we always take the positive root on the right-hand side.
Since $\hat{\rho}_n^2$ is smooth and the term within the square root is bounded
away from zero, it follows that $\bar{\rho}_n \in C^\infty(\Real)$
and consequently also $\mu_n\in C^\infty(\Real)$.
Note also that this implies that
\begin{equation}\label{positivity}
 \rho_n(x)\geq \frac{1}{n}\quad  \text{for all }x\in\Real. 
\end{equation}
Moreover, we have that
\begin{equation}\label{eq:quad}
\bar{\rho}_n^2(x)+\frac{2}{n}\bar{\rho}_n(x)=\hat\rho_n^2(x),
\end{equation}
which in particular implies that
\begin{equation}\label{size:rel}
\bar{\rho}_n^2(x) \le \hat\rho_n^2(x) \quad \text{ for all } x\in \Real.
\end{equation}

Next, we see, using the definition of $\mu_n$ in~\eqref{1:eqC} and the
equations~\eqref{size:rel} and~\eqref{def:helpmeasure}, that
\begin{equation}\label{def:measure}
\mu_n(x)=u_{n,x}^2(x)+\bar{\rho}_n^2(x)\le u_{n,x}^2(x)+\hat{\rho}_n^2(x)=\hat\mu_n(x)
\end{equation}
for all $x\in \Real$. As a consequence,
\begin{equation}\label{eq:barrhoL2}
\norm{u_{n,x}}_{L^2}^2 + \norm{\bar{\rho}_n}_{L^2}^2 = \norm{\mu_n}\leq
\norm{\hat\mu_n} =\norm{\mu},
\end{equation}
which in particular shows that $\mu_n$ is a finite Radon measure, but
also that $\bar{\rho}_n \in L^2(\Real)$ and therefore $\rho_n \in
L^2_{\rm const}(\Real)$.

So far, we have shown that $(u_n,\rho_n,\mu_n)$ is a sequence of
smooth functions contained in $\D$, and that $u_n \to u$ in
$H^1(\Real)$. 
It now remains to show that $F_n(x)\to F(x)$ at every point $x$ at
which $F$ is continuous, which is in this case equivalent to
$\mu_n\to \mu$ weakly, cf.~\cite[Props.~7.19 and
8.17]{Folland}. This means we have to prove that 
\begin{equation}\label{eq:vague}
\int_\Real \psi(x)d\mu_n(x)\to \int_\Real \psi(x)d\mu \quad \text{ as } n\to\infty
\end{equation}
for all $\psi\in C_c^\infty(\Real)$.
To that end observe first that, due to \eqref{1:eqC}, \eqref{def:helpmeasure}, and \eqref{eq:quad}, we have
\begin{equation}\label{eq:vaguevague}
\int_\Real\psi(x)d\mu_n(x)dx=\int_\Real \psi(x)d\hat\mu_n(x)-\frac2n\int_\Real\psi(x)\bar \rho_n(x)dx.
\end{equation}
We already know that $\hat\mu_n\to\mu$ weakly, that is, 
\begin{equation}\label{eq:vaguevaguevague}
\int_\Real \psi(x)d\hat\mu_n(x)\to \int_\Real \psi(x)d\mu(x) \quad \text{ as } n\to \infty,
\end{equation}
for all $\psi\in C_c^\infty(\Real)$. 
Moreover we obtain from~\eqref{eq:barrhoL2} and the Cauchy--Schwarz
inequality that
\[
\abs{\frac2n\int_\Real\psi(x)\bar \rho_n(x)dx} \le
\frac{2}{n}\norm{\psi}_{L^2}\norm{\bar{\rho}_n}_{L^2}
\le \frac{2}{n}\norm{\psi}_{L^2} \norm{\mu}^{1/2} \to 0
\]
for all $\psi\in C_c^\infty(\Real)$, which concludes the proof.
\end{proof}

\begin{remark} 
Note that one can show that the function $\frac1n \bar \rho_n$ converges pointwise to $0$. Indeed, according to \eqref{def:helpmeasure} and \eqref{eq:quad}, we have 
\begin{equation}\label{eq:helpquad}
0\leq \bar\rho_n^2(x)\leq \hat\rho_n^2(x)=\hat \mu_n(x)-u_{n,x}^2(x)\leq \hat\mu_n(x). 
\end{equation}
Moreover, from \eqref{def:hatmun} we get
\begin{align}\label{eq:helplimit}
 \frac{1}{n^2} \hat{\mu}_n(x)& = \frac{1}{n}\int_{-1}^0 \phi'(z)\Bigl(F\Bigl(x-\frac{z}{n}\Bigr)-F\Bigl(x+\frac{z}{n}\Bigr)\Bigr)dz
  \leq\frac{1}{n}\norm{\mu}\phi(0).
\end{align}
Thus combining \eqref{eq:helpquad} and \eqref{eq:helplimit} yields that the sequence $\frac1n \bar\rho_n$ is uniformly bounded and 
that 
\begin{equation*}
\frac1n \bar\rho_n\to 0 \quad \text{pointwise as } n\to \infty.
\end{equation*}
\end{remark}

\begin{remark}\label{remark:1}
In the next section we are not only going to use the splitting of $\hat\mu_n(x)$ into $u_{n,x}^2(x)$ and $\hat\rho_n^2(x)$ as introduced in \eqref{def:helpmeasure}, but also a second one, which we are introducing next. Namely, let $ F_{\rm s}(x)=\mu_{\rm s}((-\infty,x])$, where $\mu_{\rm s}$ denotes the  singular part of the measure $\mu$, and let $\phi$ be the Friedrichs mollifier. Define
\begin{equation}\label{eq:101}
 \tilde u_{n,x}^2(x)=\int_\Real \phi(z)u_x^2\Bigl(x-\frac{z}{n}\Bigr)dz
\end{equation}
and 
\begin{equation}\label{eq:102}
 \tilde \rho_n^2(x)=\int_\Real n\phi'(z) F_{\rm s}\Bigl(x-\frac{z}{n}\Bigr)dz.
\end{equation}
Then
\begin{equation}
\begin{aligned}\label{eq:103}
\tilde u_{n,x}^2(x)+\tilde\rho_n^2(x)& =\int_\Real n\phi'(z) F_{\rm s}\Bigl(x-\frac{z}{n}\Bigr)dz +\int_\Real \phi(z)u_x^2\Bigl(x-\frac{z}{n}\Bigr)dz\\
& =\int_\Real n\phi'(z) F\Bigl(x-\frac{z}{n}\Bigr)dz= \hat\mu_n(x).
\end{aligned}
\end{equation}
\end{remark}

\begin{remark}
Let $(u,0,\mu)\in\D$, $n\in\mathbb{N}$, and $(u_n,\rho_n,\mu_n)\in\D$ be defined as in Theorem~\ref{thm:approxEuler}.  By construction we then have that $u_n$, $\rho_n\in C^\infty(\Real)$, $\mu_n$ is absolutely continuous, and, according to \eqref{positivity},  that 
$\rho_n(x)\geq \frac{1}{n}$ for all $x\in\Real$. Hence \cite[Cor.~6.2]{GHR4} implies that the corresponding solution $(u_n(t), \rho_n(t),\mu_n(t))$ has the same regularity for all times $t$, and, in particular, no wave breaking occurs.
\end{remark}

\section{Convergence in Lagrangian coordinates} \label{sec:3}

The aim of this section is to show that the smooth approximating sequence constructed in Theorem~\ref{thm:approxEuler} not only converges in the set of Eulerian coordinates $\D$ but also in the set of Lagrangian coordinates $\F$. Hence, we are first going to introduce the set of Lagrangian coordinates $\F$ and the mapping $L$ from $\D$ to $\F$, before stating and proving the outlined convergence theorem. 

Let $V$ be the Banach space defined by 
\begin{equation*}
V=\{f\in C_b(\Real) \ |\ f_\xi\in\Ltwo\},
\end{equation*}
where $C_b(\Real)=C(\Real)\cap\Linf$ and the norm
is given by
$\norm{f}_V=\norm{f}_{L^\infty}+\norm{f_\xi}_{L^2}$. Let moreover
\begin{equation*}
  E=V\times H^1(\Real)\times L^2(\Real)\times L^2(\Real)\times \Real, 
\end{equation*}
then $E$ equipped with the norm 
\begin{equation}\label{norm:E}
\norm{(\zeta, U, h, \bar r, k)}_E=\norm{\zeta}_V+\norm{U}_{H^1}+\norm{h}_{L^2}+\norm{\bar r}_{L^2}+\vert k\vert
\end{equation}
is a Banach space. Note that we can associate to each $(\zeta, U, h, \bar r, k)\in E$ the tuple $(y,U,h,r)$ by setting 
\begin{equation}\label{BN}
y= \zeta +\id\quad \text{ and } \quad r=\bar r+ky_\xi.
\end{equation} 
 Conversely, for any pair $(y,r)$ such that $y-\id \in V$ and $r\in L^2_{\rm const}(\Real)$ there exists a unique triplet $(\zeta, \bar r, k)\in [L^2(\Real)]^2\times\Real$ such that \eqref{BN} holds.
For more details we refer to \cite[Sect.~ 3]{GHR4}. In what follows we will slightly abuse the notation by writing $(y,U,h,r) \in E$ instead of $(\zeta, U, h, \bar r, k)\in E$.

In addition we have to introduce the set of relabeling functions, which are not only needed for identifying equivalence classes in Lagrangian coordinates, but also for determining the set of Lagrangian coordinates.
\begin{definition}[Relabeling functions]\label{def:rf}
 We denote by $G$ the subgroup of the group of homeomorphisms $f$ of $\Real$ such that 
\begin{subequations}
\label{eq:Gcond}
 \begin{align}
  \label{eq:Gcond1}
  f-\id \text{ and } f^{-1}-\id &\text{ both belong to } W^{1,\infty}(\Real), \\
  \label{eq:Gcond2}
  f_\xi-1 &\text{ belongs to } L^2(\Real),
 \end{align}
\end{subequations}
where $\id$ denotes the identity function. 

Given $\kappa\ge0$, we denote by $G_\kappa$ the subset of $G$ defined by 
\begin{equation*}
 G_\kappa=\{ f\in G\mid  \norm{f-\id}_{W^{1,\infty}}+\norm{f^{-1}-\id}_{W^{1,\infty}}\leq\kappa\}. 
\end{equation*}
\end{definition}

We are now ready to introduce the set of Lagrangian coordinates of the 2CH system (and hence also of the CH equation). The case of the CH equation corresponds to $r(\xi)=0$ for all $\xi\in\Real$.
\begin{definition}[Lagrangian coordinates]\label{def:F} The set $\F$ is composed of all tuples $X=(y,U,h,r)\in E$, such that 
 \begin{subequations}
\label{eq:lagcoord}
\begin{align}
\label{eq:lagcoord1}
& (\zeta, U,h,r)\in [W^{1,\infty}(\Real)]^2\times [L^\infty(\Real)]^2,\\
\label{eq:lagcoord2}
&y_\xi\geq0, \quad h\geq0, \quad y_\xi+h>0
\text{  almost everywhere},\\
\label{eq:lagcoord3}
&y_\xi h=U_\xi^2+\bar r^2\text{ almost everywhere},\\
\label{eq:lagcoord4}
&y+H\in G,
\end{align}
\end{subequations}
where we denote $y(\xi)=\zeta(\xi)+\xi$ and $H(\xi)=\int_{-\infty}^\xi h(\eta)d\eta$.
\end{definition}

Moreover, we set 
\begin{equation*}
\F_\kappa=\{X\in\F \mid y+H\in G_\kappa\}.
\end{equation*}
Observe that
\begin{equation}\label{F0}
\F_0=\{X\in \F \mid y(\xi)+H(\xi)=\xi \text{ for all }\xi\in\Real\}.
\end{equation}

We note that the group $G$ acts on $\F$ by means of right composition of the form
\[
X=(y,U,h,r)  \mapsto X \circ g := (y\circ g, U\circ g, (h\circ g)g_\xi,(r\circ g)g_\xi).
\]
This group action then allows us to define equivalence classes of
Lagrangian coordinates, where we say that two coordinates $X$
and $\hat{X}$ are equivalent, if there exists some $g\in G$ such that $\hat{X} = X \circ g$.

Given an arbitrary $X=(y,U,h,r)$, we note that $y+H\in G$ and hence also $(y+H)^{-1} \in G$. 
In particular, if we introduce 
\begin{equation}\label{eq:Gammadef}
\Gamma(X)=X\circ (y+H)^{-1},
\end{equation}
then a short computation yields that $\Gamma(X)\in \F_0$.
This shows that every equivalence class $X \circ G$ of Lagrangian coordinates
has a unique canonical representative $\Gamma(X)$ in $\F_0$.
Moreover, it has been shown in \cite[Lem.~4.6]{GHR4}
that the mapping $\Gamma|_{\F_\kappa}\colon \F_\kappa \to \F_0$
is continuous for each $\kappa > 0$.

Finally we can introduce the mapping $L$ from Eulerian to Lagrangian coordinates.
\begin{theorem}[{\cite[Thm.~4.9]{GHR4}}]
\label{th:Ldef}
For any $(u,\rho,\mu)$ in $\D$, let
\begin{subequations}
\label{eq:Ldef}
\begin{align}
\label{eq:Ldef1}
y(\xi)&=\sup\left\{y\ |\ \mu((-\infty,y))+y<\xi\right\},\\
\label{eq:Ldef2}
h(\xi)&=1-y_\xi(\xi),\\
\label{eq:Ldef3}
U(\xi)&=u\circ{y(\xi)},\\
\label{eq:Ldef4}
r(\xi)&=\rho\circ{y(\xi)}y_\xi(\xi).
\end{align}
\end{subequations}
Then $(y,U,h,r)\in\F_0$. We denote by $L\colon \D\rightarrow \F_0$ the mapping which to any element $(u,\rho,\mu)\in\D$ associates $X=(y,U,h,r)\in \F_0$ given by \eqref{eq:Ldef}.  
\end{theorem}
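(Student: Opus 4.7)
The plan is to verify the four conditions in Definition~\ref{def:F} together with the $\F_0$ normalization \eqref{F0}. The natural decomposition of the proof is: (a) the regularity of $y$ and the identity $y+H=\id$; (b) the pointwise compatibility $y_\xi h = U_\xi^2+\bar r^2$; (c) the integrability bounds placing $(y,U,h,r)$ in $E$; and (d) the group condition \eqref{eq:lagcoord4}.

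For (a), set $F(x)=\mu((-\infty,x])$ and $\Phi(x)=x+F(x^-)$; then $\Phi$ is strictly increasing and left-continuous, and \eqref{eq:Ldef1} makes $y(\xi)=\sup\{x:\Phi(x)<\xi\}$ its right-continuous generalized inverse. A direct comparison of definitions shows that $y$ is non-decreasing and 1-Lipschitz, so $\zeta:=y-\id\in W^{1,\infty}$ and $0\le y_\xi\le 1$ a.e., and gives the sandwich
\begin{equation*}
  y(\xi)+\mu\bigl((-\infty,y(\xi))\bigr)\le \xi\le y(\xi)+\mu\bigl((-\infty,y(\xi)]\bigr).
\end{equation*}
Because $\mu$ is finite, the left-hand side forces $y(\xi)\to-\infty$ as $\xi\to-\infty$, whence $H(\xi)=\int_{-\infty}^{\xi}(1-y_\eta)\,d\eta=\xi-y(\xi)$. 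This gives the $\F_0$-identity \eqref{F0} at once, so $h\ge 0$ and $y_\xi+h=1>0$, settling \eqref{eq:lagcoord2}, and $y+H=\id\in G$, settling (d).

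Task (b) is the heart of the argument. Where $y_\xi=0$ the identity is trivial once one checks that the a.e.~valid chain-rule relations $U_\xi=(u_x\circ y)y_\xi$ and $\bar r=(\bar\rho\circ y)y_\xi$ make both sides vanish; one defines $u_x\circ y$ and $\bar\rho\circ y$ measurably on $\{y_\xi>0\}$ and extends arbitrarily elsewhere. On $\{y_\xi>0\}$ the task reduces to showing
\begin{equation*}
 y_\xi(\xi)\bigl(1+(u_x^2+\bar\rho^2)(y(\xi))\bigr)=1 \qquad\text{for a.e. } \xi\in\{y_\xi>0\},
\end{equation*}
since multiplying $1-y_\xi=(u_x^2+\bar\rho^2)(y)y_\xi$ by $y_\xi$ gives \eqref{eq:lagcoord3}. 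This identity is obtained by differentiating the sandwich above, after splitting $F=F_{\rm ac}+F_{\rm s}$ with $F_{\rm ac}'=u_x^2+\bar\rho^2$. One needs to know that at a.e.\ $\xi$ with $y_\xi(\xi)>0$, the point $y(\xi)$ is a Lebesgue point of $F_{\rm ac}'$ and $F_{\rm s}'(y(\xi))=0$; this is the main technical obstacle, and I would argue it via the Lebesgue differentiation theorem combined with the Luzin $N$-property of the 1-Lipschitz map $y$ (which ensures $\mu_{\rm s}$-null sets pull back to sets contained in $\{y_\xi=0\}$ up to a $\xi$-null set).

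Task (c) is then bookkeeping. The change of variables $x=y(\xi)$ on $\{y_\xi>0\}$, using $y_\xi\le 1$, gives $\norm{U_\xi}_{L^2}\le\norm{u_x}_{L^2}$ and $\norm{\bar r}_{L^2}\le\norm{\bar\rho}_{L^2}$. The bound $\norm{U}_{L^\infty}\le\norm{u}_{L^\infty}$ is immediate, and the contribution of the flat intervals of $y$ to $\norm{U}_{L^2}^2$ is controlled by $\norm{u}_{L^\infty}^2\mu_{\rm s}(\Real)$, finishing $U\in H^1(\Real)$; together with $r=\bar r+ky_\xi$ one concludes $r\in L^2_{\rm const}(\Real)$. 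The $L^\infty$ bounds $h=1-y_\xi\in L^\infty$ and $U_\xi^2+\bar r^2=y_\xi h\in L^\infty$ (so $U_\xi,\bar r\in L^\infty$) together with $\zeta\in W^{1,\infty}$ yield \eqref{eq:lagcoord1}. The hardest part, as noted, is the differentiability claim underlying task (b); everything else is either elementary monotone-function analysis or a careful application of the change-of-variables formula for Lipschitz maps.
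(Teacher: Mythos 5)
You should note at the outset that this theorem is not proved in the paper at all: it is imported verbatim from \cite[Thm.~4.9]{GHR4} (the scalar antecedent is in \cite{HR}), so the only comparison available is with the argument in that reference, and your proposal follows essentially the same route: the generalized inverse of $x\mapsto x+F(x)$ with the sandwich $y(\xi)+\mu((-\infty,y(\xi)))\le\xi\le y(\xi)+\mu((-\infty,y(\xi)])$, the resulting $1$-Lipschitz monotone $y$ with $y+H=\id$ (which gives \eqref{F0}, \eqref{eq:lagcoord2} and \eqref{eq:lagcoord4} at once), the a.e.\ identity $y_\xi\bigl(1+(u_x^2+\bar\rho^2)\circ y\bigr)=1$ on $\{y_\xi>0\}$ as the crux of \eqref{eq:lagcoord3}, and change of variables for the $E$-bounds. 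Two points need tightening. First, the parenthetical justifying the key differentiation step is misstated: ``$\mu_{\rm s}$-null sets pull back into $\{y_\xi=0\}$'' is false as written (if $\mu_{\rm s}=\delta_0$, the $\mu_{\rm s}$-null set $\Real\setminus\{0\}$ pulls back to almost everything). What you actually need, and what the standard inequality $\int_E y_\xi\,d\xi\le\meas(y(E))$ for the nondecreasing Lipschitz map $y$ delivers, is that every \emph{Lebesgue}-null set of $x$ --- a Borel carrier of $\mu_{\rm s}$, the non-Lebesgue points of $u_x^2+\bar\rho^2$ and of $u_x,\bar\rho$, and the points where $F_{\rm s}$ fails to have derivative zero --- has $y$-preimage meeting $\{y_\xi>0\}$ only in a $\xi$-null set; with that corrected the differentiation of the sandwich and the chain rule go through as you indicate. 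Second, your bound on $\norm{U}_{L^2}$ by isolating ``the flat intervals of $y$'' with the weight $\mu_{\rm s}(\Real)$ is not sufficient when $\mu_{\rm s}$ has a continuous (Cantor-type) part: $\{y_\xi=0\}$, and more generally the region where $y_\xi$ is small, need not be a union of intervals on which $y$ is constant, so the change of variables is lossy there too. The clean bookkeeping uses $y_\xi+h=1$, namely $\int_\Real U^2\,d\xi=\int_\Real U^2y_\xi\,d\xi+\int_\Real U^2h\,d\xi\le\norm{u}_{L^2}^2+\norm{u}_{L^\infty}^2\norm{h}_{L^1}$ with $\norm{h}_{L^1}=\mu(\Real)$. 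With these two repairs your argument is complete and coincides in substance with the proof in the cited reference.
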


In the case of the CH equation, we have $r(\xi)=0$ for all $\xi\in\Real$.

\begin{theorem}\label{thm:approxLagran}
Let $(u,0,\mu)\in \D$, and let $(u_n,\rho_n,\mu_n)\in\D$ be the corresponding approximating sequence defined in Theorem~\ref{thm:approxEuler}. Moreover, let $(y,U,h,0)=L((u,0,\mu))$ 
and $(y_n,U_n,h_n,r_n)=L((u_n,\rho_n,\mu_n))$.  Then
\begin{equation*}
(y_n,U_n,h_n,r_n)\to (y,U,h,0)\quad \text{in } E.
\end{equation*}
\end{theorem}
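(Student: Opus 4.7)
The plan is to verify convergence componentwise in the norm~\eqref{norm:E}. Writing $y_n=\zeta_n+\id$ and, via~\eqref{BN}, $r_n=\bar r_n+k_n y_{n,\xi}$ with $k_n=1/n$, the convergence $k_n\to 0$ is immediate, so it remains to establish $\zeta_n\to\zeta$ in $V$, $U_n\to U$ in $H^1$, $h_n\to h$ in $L^2$, and $\bar r_n\to 0$ in $L^2$. Because $(y_n,U_n,h_n,r_n)=L((u_n,\rho_n,\mu_n))\in\F_0$, the identities $y_{n,\xi}+h_n=1$ and $y_{n,\xi}h_n=U_{n,\xi}^2+\bar r_n^2$ hold pointwise, and since $\rho_n\ge 1/n>0$ all of $y_n,U_n,h_n,r_n$ are smooth, which allows classical manipulations.

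\emph{Step 1 (uniform convergence of the characteristic).} By Theorem~\ref{thm:approxEuler}, $F_n\to F$ at every continuity point of $F$. A Helly-type argument applied to $G_n:=F_n+\id$ (strictly increasing and continuous, with inverse $y_n$) and to the generalized inverse $y$ of $F+\id$ yields pointwise convergence $y_n(\xi)\to y(\xi)$ at every continuity point of $y$; since $\mu$ is finite, $h\in L^1(\Real)$, so $H$ and hence $\zeta=-H$ and $y$ are continuous throughout $\Real$. The $\zeta_n$ are non-increasing and uniformly bounded, with endpoint values $\zeta_n(\pm\infty)$ converging to $\zeta(\pm\infty)$ (the convergence $\zeta_n(+\infty)=-\|\mu_n\|\to-\|\mu\|$ following from a short computation based on the construction in Section~\ref{sec:2}). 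A Polya-type extension of Dini's theorem then upgrades pointwise to uniform convergence $\zeta_n\to\zeta$ on $\Real$.

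\emph{Step 2 ($L^2$-convergence of the derivatives).} Weak $L^2$-convergence $h_n\rightharpoonup h$ follows from the uniform bound $\|h_n\|_{L^2}^2\le\|h_n\|_{L^1}=\|\mu_n\|$ (using $0\le h_n\le 1$) together with the distributional convergence $-h_n=\zeta_{n,\xi}\to\zeta_\xi=-h$ inherited from Step~1. Strong convergence then reduces to norm convergence. Combining $h_n+y_{n,\xi}=1$ with~\eqref{eq:lagcoord3} gives $h_n^2=h_n-U_{n,\xi}^2-\bar r_n^2$, and upon integration
\[
\|h_n\|_{L^2}^2+\|U_{n,\xi}\|_{L^2}^2+\|\bar r_n\|_{L^2}^2=\|\mu_n\|,
\]
with the analogous identity $\|h\|_{L^2}^2+\|U_\xi\|_{L^2}^2=\|\mu\|$ in the limit. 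Pushing the quadratic terms forward via $y_n$, using $y_{n,\xi}=(1+\mu_n\circ y_n)^{-1}$, rewrites these on the Eulerian side as $\|U_{n,\xi}\|_{L^2}^2=\int u_{n,x}^2/(1+\mu_n)\,dx$ and $\|\bar r_n\|_{L^2}^2=\int\bar\rho_n^2/(1+\mu_n)\,dx$. Using the refined splitting $\hat\mu_n=\tilde u_{n,x}^2+\tilde\rho_n^2$ of Remark~\ref{remark:1}, together with the $L^1$-convergence $\tilde u_{n,x}^2-u_{n,x}^2\to 0$ (a consequence of Jensen's inequality and $u_{n,x}\to u_x$ in $L^2$), I would show that $\bar\rho_n^2$ differs from $\tilde\rho_n^2$ by a remainder tending to $0$ in $L^1(\Real)$, while $\tilde\rho_n^2$ concentrates within distance $1/n$ of $\supp\mu_s$. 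Splitting $\Real$ into a small neighborhood of $\supp\mu_s$ (where the largeness of $\mu_n$ in the denominator controls the contribution to $\|\bar r_n\|_{L^2}^2$) and its complement (where $\bar\rho_n^2\to 0$ in $L^1$) then gives $\|\bar r_n\|_{L^2}\to 0$; a parallel argument yields $\|U_{n,\xi}\|_{L^2}\to\|U_\xi\|_{L^2}$. Combined with $\|\mu_n\|\to\|\mu\|$, this forces $\|h_n\|_{L^2}\to\|h\|_{L^2}$, so the three strong $L^2$-convergences all follow. Finally, $U_n\to U$ in $H^1$ follows from $u_n\to u$ in $H^1\hookrightarrow L^\infty$, uniform convergence of $y_n\to y$, and the $L^2$-convergence of $U_{n,\xi}$ via a standard composition argument.

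\emph{Main obstacle.} The central technical difficulty is the vanishing $\|\bar r_n\|_{L^2}\to 0$. Although $\|\bar\rho_n\|_{L^2}$ does \emph{not} tend to $0$ in Eulerian coordinates (it approaches $\|\mu_s\|^{1/2}$), the pull-back to Lagrangian coordinates introduces a Jacobian $y_{n,\xi}$ of order $(1+nc)^{-1}$ on the $O(1/n)$-neighborhoods of $\supp\mu_s$ where $\bar\rho_n$ concentrates, producing an integrated contribution of order $O(1/n)$. Making this quantitative requires careful use of the decomposition from Remark~\ref{remark:1}, and possibly the intermediate introduction of a non-canonical Lagrangian representative living in some $\F_\kappa$ for which the Eulerian-to-Lagrangian change of variables is more direct, followed by projection back to $\F_0$ via continuity of the relabeling map $\Gamma|_{\F_\kappa}\colon\F_\kappa\to\F_0$.
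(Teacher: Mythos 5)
Your overall architecture is genuinely different from the paper's: you work directly with $X_n=L((u_n,\rho_n,\mu_n))$ and try to verify, on the Eulerian side, exactly the quantities appearing in Definition~\ref{def:convD} (norm identities via Radon--Riesz, the integrals $\int \bar\rho_n^2/(1+u_{n,x}^2+\bar\rho_n^2)dx$, etc.), which would essentially let you conclude as in Theorem~\ref{thm:EulerLag}. The paper instead sidesteps the Eulerian estimates by introducing the relabeled sequence $\hat X_n$ built from the mollified measure $\hat\mu_n$ (for which $\norm{\hat\mu_n}=\norm{\mu}$ holds exactly), proving pointwise a.e.\ convergence of $\hat y_{n,\xi}$ through delicate mollifier estimates split into the cases $y_\xi(\xi)=0$ and $y_\xi(\xi)>0$, and returning to $X_n$ via the uniform membership $g_n\in G_\kappa$ and continuity of $\Gamma|_{\F_\kappa}$. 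Your route is viable in principle, but its central analytic step is exactly where the sketch breaks down.

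The genuine gap is the argument for $\norm{\bar r_n}_{L^2}\to 0$ (and, by the same mechanism, for $\norm{U_{n,\xi}}_{L^2}\to\norm{U_\xi}_{L^2}$). You propose to split $\Real$ into a small neighborhood of $\supp\mu_{\rm s}$ and its complement, using ``largeness of the denominator'' on the former. This fails in general: $\supp\mu_{\rm s}$ need not have small Lebesgue measure (think of atoms placed on a dense countable set, or a singular continuous part whose closed support is an interval), so the $1/n$-neighborhood of $\supp\mu_{\rm s}$ is not small; and on that neighborhood the mollified density is not uniformly large --- near points carrying only a tiny amount of singular mass, $1+u_{n,x}^2+\bar\rho_n^2$ stays of order one, so the denominator does not control the contribution pointwise. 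What is true is only the $\mu_{\rm s}$-a.e.\ differentiation statement $n\,\mu_{\rm s}([x-\tfrac1n,x+\tfrac1n])\to\infty$, and converting that into the required $L^1$-smallness needs an Egorov/decomposition argument (or the paper's pointwise Lagrangian analysis), none of which is in the sketch. In addition, two auxiliary claims are not as immediate as stated: $\norm{\mu_n}\to\norm{\mu}$ is equivalent to $\tfrac1n\norm{\bar\rho_n}_{L^1}\to 0$ (since $\norm{\mu_n}=\norm{\mu}-\tfrac2n\int_\Real\bar\rho_n\,dx$), which does not follow from the uniform $L^2$-bound on $\bar\rho_n$ and is part of what must be proven --- this is precisely why the paper replaces $\mu_n$ by $\hat\mu_n$, for which $\norm{\hat\mu_n}=\norm{\mu}$ exactly; and $\norm{\bar\rho_n^2-\tilde\rho_n^2}_{L^1}\to0$ again reduces to the same unproven quantity $\tfrac1n\norm{\bar\rho_n}_{L^1}$. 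So, while the componentwise reductions and the identity $\norm{h_n}_{L^2}^2+\norm{U_{n,\xi}}_{L^2}^2+\norm{\bar r_n}_{L^2}^2=\norm{\mu_n}$ are correct, the proposal as written does not close the key step.
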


\begin{proof}

Let $(u,0,\mu)\in \D$ and $(u_n,\rho_n,\mu_n)\in\D$ be the approximating series defined in Theorem~\ref{thm:approxEuler}. Furthermore, let $X=(y,U,h,0)=L((u,0,\mu))$ and $X_n=(y_n,U_n,h_n,r_n)=L((u_n,\rho_n,\mu_n))$, which yields a smooth sequence in Lagrangian coordinates, cf.~\cite[Proof of Thm.~6.1]{GHR4}. 
However, due to the construction of our approximating sequence $(u_n,\rho_n,\mu_n)$, it turns out that in order to prove that $X_n\to X$ in Lagrangian coordinates, it is better to introduce another sequence $\hat X_n=(\hat u_n, \hat \rho_n, \hat \mu_n)$ which is linked to the sequence $X_n$ via relabeling. For better understanding, we split the proof into several steps. After first defining the new sequence $\hat X_n$, we show that for every $n\in \mathbb{N}$ there exists $g_n\in\Gr$ such that $\hat X_n=X_n\circ g_n$ (Step 1). Thereafter, we establish that $\hat X_n\to X $ in $E$ (Steps 2--9). Finally, we  show that $\hat X_n\to X$ implies $X_n\to X$ in $E$ (Step 10). The situation is also depicted in Figure~\ref{Figure}.

\begin{figure}
\begin{displaymath}
\xymatrix{(u_n,\rho_n,\mu_n)\to (u,0,\mu) \ar[r]_{\text{Step 1}} \ar[d]_{\text{Theorem}~\ref{thm:approxLagran}}& \hat X_n=X_n\circ g_n \ar[d] _{\text{Steps 2--9}}\\
(y_n,U_n,h_n,r_n)\to (y,U,h,r)& (\hat y_n, \hat U_n, \hat h_n, \hat r_n)\to (y,U,h,r) \ar[l]_{\text{Step 10}}} 
\end{displaymath}
\caption{Outline of the proof of Theorem~\ref{thm:approxLagran}.}
\label{Figure}
\end{figure}

{\bf Step 1. Definition of the sequence $\hat X_n$  and proof that $\hat X_n=X_n\circ g_n$.} 
Define\footnote{This construction resembles the one used in Step 2 of the proof of Theorem \ref{thm:approxEuler}. However, here we perform the construction in Lagrangian variables.} 
$\hat F_{n}(x)$ by
\begin{equation}\label{def:hatFn1}
\hat F_{n}(x)= \int_\Real n\phi(n(x-y))F(y)dy=\hat\mu_{n}((-\infty, x]),
\end{equation}
such that 
\begin{equation*}
\hat\mu_{n}(x)=\int_{-1}^0n\phi'(z)\Bigl(F\Bigl(x-\frac{z}{n}\Bigr)-F\Bigl(x+\frac{z}{n}\Bigr)\Bigr)dz.
\end{equation*}
Introduce $\hat\rho_n^2=\hat\mu_{n}-u_{n,x}^2$. Then $\hat \mu_n=u_{n,x}^2+\bar\rho_n^2+\frac2n\bar \rho_n=u_{n,x}^2+\hat\rho_n^2$, 
as in \eqref{def:helpmeasure}--\eqref{eq:quad}.
Let now $\hat X_n=(\hat y_n, \hat U_n, \hat h_n,\hat r_n)\in \F$, where 
\begin{subequations}
\label{eq:Ldefneu}
\begin{align}
\label{eq:Ldef1neu}
\hat y_n(\xi)&=\sup\left\{y\ |\ \hat\mu_n((-\infty,y))+y<\xi\right\}, \\
\label{eq:Ldef2neu}
\hat h_n(\xi)&=1-\frac2n \bar{\hat{r}}_n(\xi)-\hat y_{n,\xi}(\xi),\\
\label{eq:Ldef3neu}
\hat U_n(\xi)&=u_n\circ{\hat y_n(\xi)},\\
\label{eq:Ldef6neu}
\bar{\hat{r}}_n(\xi)&=\bar\rho_n\circ{\hat y_n(\xi)}\hat y_{n,\xi}(\xi),\\
\label{eq:Ldef4neu}
\hat r_n(\xi)&=\rho_n\circ{\hat y_n(\xi)}\hat y_{n,\xi}(\xi).
\end{align}
\end{subequations}

We are going to show that we can write $\hat X_n=X_n\circ g_n$ for some $g_n\in \Gr$,
that is,\footnote{Note the factors $g_{n,\xi}(\xi)$.}
\begin{align}\label{eq:circ}
\hat X_n(\xi)&=(\hat y_n(\xi), \hat U_n(\xi), \hat h_n(\xi), \hat r_n(\xi)) \\
&=\big(y_n(g_n(\xi)),U_n(g_n(\xi)), h_n(g_n(\xi))g_{n,\xi}(\xi),  r_n(g_n(\xi))g_{n,\xi}(\xi)\big) \notag \\
&=X_n\circ g_n(\xi),\notag
\end{align}
which implies immediately that $\hat X_n\in \F$ and that it belongs to the same equivalence class as $X_n$.
Additionally, we will show that there exists some $\kappa$ independent of $n$
such that $g_n \in \Gr_\kappa$ for all $n \in \mathbb{N}$.

Since both $\mu_n$ and $\hat\mu_n$ are smooth and purely absolutely continuous, we have that 
\begin{equation}\label{eq:2b}
y_n(\xi)+F_n(y_n(\xi))=\xi,
\end{equation}
and
\begin{equation}\label{eq:2}
\hat y_n(\xi)+\hat F_n(\hat y_n(\xi))=\xi,
\end{equation}
for all $\xi\in\Real$. Moreover, recall that 
\begin{equation*}
\mu_n(x)+\frac2n \bar\rho_n(x)=\hat\mu_n(x) \quad \text{for all }x\in\Real
\end{equation*}
according to \eqref{eq:quad}, \eqref{def:helpmeasure} and \eqref{1:eqC}, 
and $\frac2n \bar\rho_n\in L^1(\Real)$.
Hence we can rewrite \eqref{eq:2} as 
\begin{equation}\label{def:gn}
\hat y_n(\xi)+\hat H_n(\xi)=\hat y_n(\xi)+F_n(\hat y_n(\xi))=\xi-\frac 2n \int_{-\infty}^{\hat y_n(\xi)} \bar \rho_n(x)dx=g_n(\xi),
\end{equation}
which defines $g_n(\xi)$. Here $\hat H_n(\xi)=\int_{-\infty}^\xi \hat h_n(\eta)d\eta$. Moreover, using \eqref{eq:2b} we have
\begin{equation*}
y_n(g_n(\xi))+F_n(y_n(g_n(\xi)))=g_n(\xi) \quad \text{ for all } \xi\in\Real,
\end{equation*}
and, since $\id+F_n$ is strictly increasing,  we conclude that 
\begin{equation} \label{eq:rel}
\hat y_n(\xi)=y_n(g_n(\xi)) \quad \text{ for all } \xi\in\Real,
\end{equation}
which immediately implies that 
\[
\hat{U}_n(\xi) = u_n(\hat{y}_n(\xi)) = u_n(y_n(g_n(\xi))) = U_n(g_n(\xi))
\quad \text{ for all } \xi \in \Real.
\]
Using \eqref{eq:Ldef2neu}, \eqref{eq:Ldef6neu}, \eqref{def:gn}, \eqref{eq:rel}, and \eqref{eq:Ldef4} we infer that
\begin{align}
\hat h_n(\xi)&=1-\frac2n \bar{\hat{r}}_n(\xi)-\hat y_{n,\xi}(\xi)
=g_{n,\xi}(\xi)-\hat y_{n,\xi}(\xi) \notag\\
&=\bigl(1-y_{n,\xi}(g_n(\xi))\bigr)g_{n,\xi}(\xi) 
=h_n(g_n(\xi))g_{n,\xi}(\xi).  \label{eq:h_n}
\end{align}
In addition, we see that
\begin{equation*}
\hat r_n(\xi)=\rho_n(\hat y_n(\xi))\hat y_{n,\xi}(\xi)= r_n(g_n(\xi))g_{n,\xi}(\xi).
\end{equation*}
Thus we conclude that  $\hat X_n=X_n\circ g_n$, and it remains to show that $g_n\in \Gr_\kappa$
for some $\kappa$ independent of $n$.

Instead of checking that $g_n$ satisfies all the properties listed in Definition~\ref{def:rf}, we are going to apply \cite[Lem.~3.2]{HR}. Namely, if $g_n$ is absolutely continuous, $g_{n,\xi}-1\in L^2(\Real)$, and there exist $c_1\geq 1$ and $c_2 > 0$ such that $\frac{1}{c_1}\leq g_{n,\xi}(\xi)\leq c_1$ almost everywhere and $\norm{g_n-\id}_{L^\infty}\le c_2$,
then $g_n\in G_{\kappa}$ for some $\kappa>0$ depending only on $c_1$ and $c_2$. 
By construction, $\hat X_n$ is smooth and therefore $g_n$ is smooth and, in particular, absolutely continuous. Since $\frac2n \bar\rho_n\in L^1(\Real)$ and $\bar\rho_n$ is strictly positive, we get from \eqref{def:gn} that $g_n-\id\in L^\infty(\Real)$, and from \eqref{def:helpmeasure}, \eqref{eq:quad}, and \eqref{eq:barrhoL2} that $\norm{g_n-\id}_{L^\infty}\leq \norm{\mu}$. Moreover,
using the notation in the proof of Theorem~\ref{thm:approxEuler}, we obtain from \eqref{eq:2} and~\eqref{def:helpmeasure} that
\begin{equation}\label{est:abl}
\hat y_{n,\xi}(\xi)=\frac1{1+u_{n,x}^2(\hat y_n(\xi))+\hat\rho_n^2(\hat y_n(\xi))}\leq 1.
\end{equation}
Thus
\begin{align} \nn 
\frac12&\leq \frac{1+u_{n,x}^2(\hat y_n(\xi))+\bar\rho_n^2(\hat y_n(\xi))}{1+\frac1{n^2}+u_{n,x}^2(\hat y_n(\xi))+2\bar\rho_n^2(\hat y_n(\xi))} \\ \nn
&\leq\frac{1+u_{n,x}^2(\hat y_n(\xi))+\bar\rho_n^2(\hat y_n(\xi))}{1+u_{n,x}^2(\hat y_n(\xi))+\bar \rho_n^2(\hat y_n(\xi))+\frac{2}{n}\bar\rho_n(\hat y_n(\xi))} \\
\label{est:Gkappa}
& = 1-\frac{\frac2n\bar\rho_n(\hat y_n(\xi))}{1+u_{n,x}^2(\hat y_n(\xi))+\hat\rho_n^2(\hat y_n(\xi))} \nn \\
&= g_{n,\xi}(\xi)\leq 1,
\end{align}
due to \eqref{eq:quad} and \eqref{def:gn}. 
Finally, we have to check that $g_{n,\xi}-1\in L^2(\Real)$. Direct computations, using \eqref{def:gn} and \eqref{est:abl}, yield
\begin{equation*}
\int_\Real (g_{n,\xi}(\xi)-1)^2d\xi=\frac4{n^2}\int_\Real \bar\rho_n^2(\hat y_n(\xi))\hat y_{n,\xi}^2(\xi)d\xi
 \leq \frac4{n^2}\int_\Real \bar \rho_n^2(x)dx.
\end{equation*}
Thus $g_{n,\xi}-1\in L^2(\Real)$, since $\bar \rho_n\in L^2(\Real)$. 
Thus \cite[Lem.~3.2]{HR} implies that $g_n$ is a relabeling fuction
and that there exists $\kappa > 0$ independent of $n$ such that $g_n \in \Gr_\kappa$ for all $n \in \mathbb{N}$.

{\bf Step 2: The sequence $\hat y_n-\id$ converges to $y-\id$ in $L^\infty(\Real)$.}  Recall that we have by definition that 
\begin{equation}\label{3.22}
y(\xi)+F(y(\xi)-)\leq \xi\leq y(\xi)+F(y(\xi)) \quad \text{ for all }\xi\in\Real,
\end{equation}
where $F(x)=\mu((-\infty,x])$.
Moreover, since $\hat \mu_n(x)$ is smooth and purely absolutely continuous, we have
\begin{equation}\label{3.23}
\hat y_n(\xi)+\hat F_n(\hat y_n(\xi))=\xi \quad \text{ for all } \xi\in\Real,
\end{equation}
where $\hat F_n(x)=\hat \mu_n((-\infty,x])$.
Introducing
\begin{equation}\label{3.24}
G(x):=x+F(x) \quad \text{ and } \quad \hat G_n(x):=x+\hat F_n(x),
\end{equation}
we conclude that
\begin{align*}
\hat G_n(\hat y_n(\xi))& =\hat y_n(\xi)+ \hat F_n(\hat y_n(\xi))\\ 
& = \int_{-1}^1 \phi(z)\Bigl(\hat y_n(\xi)-\frac{z}{n}+F\Bigl(\hat y_n(\xi)-\frac{z}{n}\Bigr)\Bigr)dz \\ 
& =\int_{-1}^1 \phi(z)G\Bigl(\hat y_n(\xi)-\frac{z}{n}\Bigr)dz,
\end{align*}
where we used \eqref{def:hatFn1}.
Moreover, since $G(x)$ is strictly increasing and due to \eqref{3.23} and \eqref{3.24}, one has that
\begin{equation}\label{3.28}
G\Bigl(\hat y_n(\xi)-\frac{1}{n}\Bigr)<\hat G_n (\hat y_n(\xi))=\xi< G\Bigl(\hat y_n(\xi)+\frac{1}{n}-\Bigr),
\end{equation}
and by \eqref{3.22} that 
\begin{equation}\label{3.29}
G(y(\xi)-)\leq\xi\leq G(y(\xi)).
\end{equation}
Combining \eqref{3.28} and \eqref{3.29} yields on the one hand that 
\begin{equation*}
G\Bigl(\hat y_n(\xi)-\frac1n\Bigr)< \xi\leq G(y(\xi))
\end{equation*}
and on the other hand that 
\begin{equation*}
G(y(\xi)-)\leq \xi < G\Bigl(\hat y_n(\xi)+\frac1n-\Bigr).
\end{equation*}
Recalling that $G(x)$, and hence also $G(x-)$, is strictly increasing, we obtain that 
\begin{equation*}
\hat y_n(\xi)-\frac{1}{n}<y(\xi)<\hat y_n(\xi)+\frac1n
\end{equation*}
or, equivalently,
\begin{equation*}
-\frac1n <y(\xi)-\hat y_n(\xi)<\frac1n \quad \text{ for all } \xi\in\Real.
\end{equation*}
In particular, this shows that $\norm{\hat y_n-y}_{L^\infty} \to 0$ as $n\to \infty$.

{\bf Step 3. Convergence of $\tilde h_n$ to $h$ in $L^1(\Real)$.}  
Let 
\begin{equation}\label{eq:Ldef5neu}
\tilde h_n=1-\hat y_{n,\xi}.
\end{equation}
To show that $\tilde h_n\to h$ in $L^1(\Real)$ is the main (and most difficult) step. 

Due to our change from Eulerian to Lagrangian coordinates, it is not clear at first sight that $\tilde h_n$ and $h$ belong to $L^1(\Real)$. We know that $hy_\xi=U_\xi^2$, or,  equivalently, $h=U_\xi^2+h^2$, because $y_\xi+h=1$. However, since $U_\xi$ and $h$ both belong to $L^2(\Real)$, it follows that $h\in L^1(\Real)$. Combining \eqref{eq:Ldef5neu} and \eqref{eq:Ldef2neu},  and recalling that $\hat X\in \F$ which implies that \eqref{eq:lagcoord3} is satisfied, one obtains
\begin{equation}\label{eq:tildehnyn}
\tilde h_n=
\tilde h_n\hat y_{n,\xi}+\tilde h_n^2= \hat h_n\hat y_{n,\xi}+\frac2n \bar{\hat{r}}_n\hat y_{n,\xi}+\tilde h_n^2= \hat U_{n,\xi}^2+\bar{\hat{r}}_n^2+\frac2n \bar{\hat{r}}_n\hat y_{n,\xi}+\tilde h_n^2,
\end{equation}
where $\bar{\hat{r}}_n$, $\tilde h_n$, $\hat U_{n,\xi}\in L^2(\Real)$, $\frac2n \bar{\hat{r}}_n\in L^1(\Real)$ and $\hat{y}_{n,\xi} \in L^\infty(\Real)$. Thus also $\tilde h_n\in L^1(\Real)$.
 Define 
\begin{equation}\label{eq:7}
 \tilde H_n(\xi)=\int_{-\infty}^\xi \tilde h_n(\eta)d\eta \quad \text{and}\quad H(\xi)=\int_{-\infty}^{\xi}h(\eta)d\eta. 
\end{equation}
Then the identities $\tilde H_n+\hat y_n=\id$ and $H+y=\id$ together with the pointwise convergence of $\hat y_n\to y$ imply that $\tilde H_n$ converges pointwise to $H$ and $\tilde H_n(\infty)=\norm{\hat\mu_n}_{L^1}$ and $H(\infty)=\norm{\mu}$. In particular, this means that
$\tilde H_n(\infty)= H(\infty)$ for all $n\in\Natural$ and hence 
\begin{equation}\label{eq:tildehnynNorm}
\bnorm{\tilde h_n}_{L^1}= \norm{h}_{L^1}, \quad n\in\Natural.
\end{equation}

Next we will prove that $\tilde h_n$ converges to $h$ pointwise almost everywhere, which will imply that $\tilde h_n\to h$ in $L^1(\Real)$, see \cite[Prop.~1.33]{Ambrosio}. To that end, observe first that $\tilde h_n(\xi)-h(\xi)=y_\xi(\xi)-\hat y_{n,\xi}(\xi)$ for all $\xi\in\Real$. Thus it suffices to show that $\hat y_{n,\xi}$ converges pointwise to $y_\xi$ almost everywhere. Recalling \eqref{3.23}, \eqref{3.24}, and that $\hat y_n$ is smooth, we see that this is equivalent to showing that 
\begin{equation}\label{goal2}
\hat G_n'(\hat y_n(\xi))\to \frac{1}{y_\xi(\xi)} \quad \text{ for almost every } \xi\in\Real.
\end{equation}
Moreover, note that 
\begin{align}\nonumber
\hat G_n'(\hat y_n(\xi))& =n \int_{-1}^1 \phi'(z)G\Bigl(\hat y_n(\xi)-\frac{z}{n}\Bigr)dz \\ \nonumber
& = n\int_{-1}^1 \phi'(z) \Bigl(G\Bigl(\hat y_n(\xi)-\frac{z}{n}\Bigr)-\xi\Bigr)dz\\ \label{def:Gpr}
& = n^2\int_{\hat y_n(\xi)-\frac1n}^{\hat y_n(\xi)+\frac1n} \phi'(n(\hat y_n(\xi)-z)) (G(z)-\xi)dz.
\end{align} 
Introducing the strictly increasing function $\tilde G(z)=G(z)-\xi$, it follows that we have to show that 
\begin{equation}\label{eq:Gnptoyxi}
\hat G_n'(\hat y_n(\xi))= n^2\int_{\hat y_n(\xi)-\frac1n}^{\hat y_n(\xi)+\frac1n}\phi'(n(\hat y_n(\xi)-z))\tilde G(z)dz\to \frac{1}{y_\xi(\xi)}
\end{equation}
for almost every $\xi\in\Real$.

In fact, we will show below that~\eqref{eq:Gnptoyxi} holds at every $\xi \in \Real$ where the function $y$ is differentiable. Since $y$ is Lipschitz continuous and therefore differentiable almost everywhere, this will prove the convergence of $\tilde{h}_n$ to $h$ in $L^1(\Real)$.
In the proof of~\eqref{eq:Gnptoyxi}, we will consider seperately the cases where
the derivative of $y$ is zero, and where it is strictly positive.

{\bf (a) The case $y_\xi(\xi)=0$}.  
We have to show that $\hat G_n'(\hat y_n(\xi))\to\infty$ as $n\to\infty$.
By assumption $y_\xi(\xi)=0$ and hence for every $\varepsilon>0$ there exists some $\delta_\varepsilon>0$ such that 
\begin{equation}\label{est:quot}
0\leq \frac{y(\eta)-y(\xi)}{\eta-\xi}<\varepsilon \quad \text{ whenever } \vert \xi-\eta\vert <\delta_\varepsilon.
\end{equation}
Define $\gamma_\varepsilon:= \varepsilon\delta_\varepsilon$ and let $z\in \Real$ such that $\vert z-y(\xi)\vert <\gamma_\varepsilon$. In addition, recall \eqref{3.22} and \eqref{3.24}, and observe that $y(G(z))=z$ for all $z\in \Real$. If $\vert G(z)-\xi\vert <\delta_\varepsilon$, we have by \eqref{est:quot} that
\begin{equation*}
\frac{G(z)-\xi}{z-y(\xi)}=\frac{G(z)-\xi}{y(G(z))-y(\xi)}>\frac1{\varepsilon}.
\end{equation*}
On the other hand, if $\vert G(z)-\xi\vert \geq \delta_\varepsilon$, then 
\begin{equation*}
\frac{G(z)-\xi}{z-y(\xi)}=\frac{\vert G(z)-\xi\vert }{\vert z-y(\xi)\vert} > \frac{\delta_\varepsilon}{\gamma_\varepsilon}=\frac1{\varepsilon}.
\end{equation*}
Thus
\begin{equation}\label{help:est}
\frac{G(z)-\xi}{z-y(\xi)}>\frac{1}{\varepsilon} \quad \text{ whenever } \vert z-y(\xi)\vert <\gamma_\varepsilon.
\end{equation}

In the remainder of this subsection we are going to show that there exists a constant $C>0$ independent of $n$ and $\varepsilon$ such that 
\begin{equation}\label{goal1}
\hat G_n'(\hat y_n(\xi))>\frac{C}{\varepsilon} \quad \text{ for all } n \text{ such that } \frac2n<\gamma_\varepsilon,
\end{equation}
which will prove the claim.
Let 
\begin{equation}\label{def:I0}
I_0:= \{ z\mid \vert z-\hat y_n(\xi)\vert \leq \vert y(\xi)-\hat y_n(\xi)\vert \}=\hat y_n(\xi)+\{ t \mid \vert t\vert \leq \vert y(\xi)-\hat y_n(\xi)\vert \}.
\end{equation}
Direct computations show that for all $z\in \Real\setminus I_0$
\begin{equation}\label{goal11}
\phi'(n(\hat y_n(\xi)-z))\tilde G(z)\geq 0
\end{equation}
and that 
\begin{align}\nn
\int_{I_0} \phi'(n(\hat y_n(\xi)-z)) &\tilde G(z)dz\\
&= \int_{-\vert \hat y_n(\xi)-y(\xi)\vert}^{\vert \hat y_n(\xi)-y(\xi)\vert } \phi'(nz)\tilde G(\hat y_n(\xi)-z) dz \nn\\ \label{goal12}
& = \int_0^{\vert \hat y_n(\xi)-y(\xi)\vert} \phi'(nz)\big(\tilde G(\hat y_n(\xi)-z)-\tilde G(\hat y_n(\xi)+z)\big)dz \geq 0,
\end{align}
since both terms in the last integral are non-positive on the interval of integration.

Again, we have to consider two situations seperately depending on
the difference of $y(\xi)$ and $\hat{y}_n(\xi)$.

{\it (a.I) The case $\vert \hat y_n(\xi)-y(\xi)\vert \leq \frac1{2n}$.} We only prove \eqref{goal1} in the case $y(\xi)\leq \hat y_n(\xi)\leq y(\xi)+\frac{1}{2n}$ and leave the other case, which follows the same lines, to the interested reader. Using \eqref{def:Gpr}, \eqref{help:est}, \eqref{goal11}, and \eqref{goal12} we have 
\begin{align*}
\hat G_n'(\hat y_n(\xi))
& =n^2 \int_{\hat y_n(\xi)-\frac1n}^{y(\xi)} \phi'(n(\hat y_n(\xi)-z))\tilde G(z)dz\\
& \quad + n^2 \int_{I_0} \phi'(n(\hat y_n(\xi)-z))\tilde G(z)dz\\
& \quad +n^2\int_{2\hat y_n(\xi)-y(\xi)}^{\hat y_n(\xi)+\frac1n} \phi'(n(\hat y_n(\xi)-z))\tilde G(z)dz\\
& \geq n^2\int_{\hat y_n(\xi)-\frac1n}^{y(\xi)} \phi'(n(\hat y_n(\xi)-z))\tilde G(z)dz\\
& \geq n^2\int_{\hat y_n(\xi)-\frac1n}^{y(\xi)} \phi'(n(\hat y_n(\xi)-z))\frac{z-y(\xi)}{\varepsilon}dz\\
& = \frac{n}{\varepsilon}\int_{\hat y_n(\xi)-\frac1n}^{y(\xi)}\phi(n(\hat y_n(\xi)-z))dz\\
& \geq \frac{n}{\varepsilon} \int_{\hat y_n(\xi)-\frac1n}^{\hat y_n(\xi)-\frac1{2n}} \phi(n(\hat y_n(\xi)-z))dz\\
& = \frac{1}{\varepsilon} \int_{\frac12}^1 \phi(z)dz= \frac{C}{\varepsilon}
\end{align*}
with $C = \int_{1/2}^1 \phi(z)dz$.
Here we applied \eqref{help:est} to $\tilde G(z)=G(z)-\xi$, which is satisfied since we assume that $\frac2n<\gamma_\varepsilon$. 

{\it (a.II) The case $\frac1{2n}<\vert \hat y_n(\xi)-y(\xi)\vert < \frac{1}{n}$.} We only prove \eqref{goal1} in the case $y(\xi)+\frac1{2n}< \hat y_n(\xi)< y(\xi)+\frac1n$ and leave the other case, which follows the same lines, to the interested reader. Due to \eqref{goal11} and \eqref{goal12} we have 
\begin{align*}
\hat G_n'(\hat y_n(\xi))& = n^2 \int_{\hat y_n(\xi)-\frac1n}^{y(\xi)} \phi'(n(\hat y_n(\xi)-z))\tilde G(z)dz\\
& \quad + n^2 \int_{I_0} \phi'(n(\hat y_n(\xi)-z))\tilde G(z)dz\\
& \quad + n^2\int_{2\hat y_n(\xi)-y(\xi)}^{\hat y_n(\xi)+\frac1n} \phi'(n(\hat y_n(\xi)-z))\tilde G(z)dz\\
& \geq n^2\int_{\hat y_n(\xi)-\frac1n}^{y(\xi)} \phi'(n(\hat y_n(\xi)-z))\tilde G(z)dz\\ 
& = n\int_{n(\hat y_n(\xi)-y(\xi))}^{1} \phi'(z)\tilde G\Bigl(\hat y_n(\xi)-\frac{z}{n}\Bigr)dz.
\end{align*}
Let us turn our attention to the last integral 
\begin{equation*} 
n\int_{n(\hat y_n(\xi)-y(\xi))}^1 \phi'(z)\tilde G\Bigl(\hat y_n(\xi)-\frac{z}{n}\Bigr)dz,
\end{equation*}
where $n(\hat y_n(\xi)-y(\xi))\in (\frac12, 1)$. Since $\tilde G(\hat y_n(\xi)-\frac{z}{n})$ is strictly decreasing and $\tilde G(\hat y_n(\xi)-\frac{z}{n}-)\leq 0$ for all $z\in [n(\hat y_n(\xi)-y(\xi)),1 ]$, we have 
\begin{align*}
0& \leq n\int_{n(\hat y_n(\xi)-y(\xi))}^1 \phi'(z) \tilde G(\hat y_n(\xi)-\frac{z}{n})dz \\
& =-n\int_{n(\hat y_n(\xi)-y(\xi))}^1 \int_{\tilde G(\hat y_n(\xi)-\frac{z}{n})}^0\phi'(z)dp\,dz.
\end{align*}
Since the area of integration has finite measure and the integrand is uniformly bounded, we can interchange the order of integration and get
\begin{align*}
-n\int_{n(\hat y_n(\xi)-y(\xi))}^1& \int_{\tilde G(\hat y_n(\xi)-\frac{z}{n})}^0 \phi'(z)dp\,dz\\
& = -n\int_{\tilde G(\hat y_n(\xi)-\frac{1}{n})} ^0\int_{\max(n(\hat y_n(\xi)-y(\xi)),n(\hat y_n(\xi)- \tilde G^{-1}(p)))}^{1}\phi'(z)dz\,dp.
\end{align*}
Evaluating the inner integral and using that $\phi(z)$ is decreasing on $[0,1]$, we end up with 
\begin{align*}
-n&\int_{\tilde G(\hat y_n(\xi)-\frac{1}{n})} ^0 \int_{\max(n(\hat y_n(\xi)-y(\xi)),n(\hat y_n(\xi)- \tilde G^{-1}(p)))}^{1}\phi'(z)dz\,dp\\
& = n \int_{\tilde G(\hat y_n(\xi)-\frac{1}{n})}^0 \phi\bigl(\max (n(\hat y_n(\xi)-y(\xi)), n(\hat y_n(\xi)-\tilde G^{-1}(p)))\bigr) dp\\
& \geq n\int_{\tilde G(\hat y_n(\xi)-\frac1n)}^0 \int_{\max(n(\hat y_n(\xi)-y(\xi)), n(\hat y_n(\xi)-\tilde G^{-1}(p)))}^1 \\ 
& \qquad \qquad  \frac{\phi(z)}{1-\max(n(\hat y_n(\xi)-y(\xi)), n(\hat y_n(\xi)-\tilde G^{-1}(p)))}dz\, dp\\
& \geq \frac{n}{1-n(\hat y_n(\xi)-y(\xi))} \int_{\tilde G(\hat y_n(\xi)-\frac1n)}^0 \int_{\max(n(\hat y_n(\xi)-y(\xi)), n(\hat y_n(\xi)- \tilde G^{-1}(p)))}^1 \phi(z) dz\,dp\\
& = \frac{n}{1-n(\hat y_n(\xi)-y(\xi))}  \int_{n(\hat y_n(\xi)-y(\xi))}^1 \int_{\tilde G(\hat y_n(\xi)-\frac{z}{n})}^0 \phi(z)dp\,dz\\
& = -\frac{n}{1-n(\hat y_n(\xi)-y(\xi))} \int_{n(\hat y_n(\xi)-y(\xi))}^1  \phi(z)\tilde G\Bigl(\hat y_n(\xi)-\frac{z}{n}\Bigr)dz.
\end{align*}
In the last step we used once more that both the area of integration and the integrand are bounded, which justifies once more the interchange of the order of integration. Thus we showed, so far, that 
\begin{align*}
\hat G_n'(\hat y_n(\xi))& \geq n\int_{n(\hat y_n(\xi)-y(\xi))}^1 \phi'(z)\tilde G\Bigl(\hat y_n(\xi)-\frac{z}{n}\Bigr)dz\\
& \geq -\frac{n}{1-n(\hat y_n(\xi)-y(\xi))} \int_{n(\hat y_n(\xi)-y(\xi))}^1 \phi(z) \tilde G\Bigl(\hat y_n(\xi)-\frac{z}{n}\Bigr)dz.
\end{align*}
The last step towards \eqref{goal1} is to replace the interval of integration $[n(\hat y_n(\xi)-y(\xi)),1]$ by $[-1, n(\hat y_n(\xi)-y(\xi))]$ and to use \eqref{help:est}. To that end observe that we have 
\begin{equation} \label{help:0}
\int_{-1}^1 \phi(z)G\Bigl(\hat y_n(\xi)-\frac{z}{n}\Bigr)dz=\hat G_n(\hat y_n(\xi))=\xi=\int_{-1}^1 \phi(z)\xi dz \quad \text{ for all } \xi\in \Real.
\end{equation}
Since $\tilde G(\hat y_n(\xi)-\frac{z}{n})=G(\hat y_n(\xi)-\frac{z}{n})-\xi$, we have 
\begin{equation*}
\int_{-1}^{n(\hat y_n(\xi)-y(\xi))} \phi(z) \tilde G\Bigl(\hat y_n(\xi)-\frac{z}{n}\Bigr)dz = -\int_{n(\hat y_n(\xi)-y(\xi))}^1 \phi(z) \tilde G\Bigl(\hat y_n(\xi)-\frac{z}{n}\Bigr)dz,
\end{equation*}
and, accordingly,
\begin{align*}
\hat G_n'(\hat y_n(\xi))& \geq -\frac{n}{1-n(\hat y_n(\xi)-y(\xi))}\int_{n(\hat y_n(\xi)-y(\xi))}^1 \phi(z)\tilde G\Bigl(\hat y_n(\xi)-\frac{z}{n}\Bigr)dz\\
& =\frac{n}{1-n(\hat y_n(\xi)-y(\xi))} \int_{-1}^{n(\hat y_n(\xi)-y(\xi))} \phi(z)\tilde G\Bigl(\hat y_n(\xi)-\frac{z}{n}\Bigr)dz\\
& \geq \frac{n}{1-n(\hat y_n(\xi)-y(\xi))}\int_{-1}^0 \phi(z)\frac{\hat y_n(\xi)-\frac{z}{n}-y(\xi)}{\varepsilon}dz\\ 
& \geq \frac{n(\hat y_n(\xi)-y(\xi))}{1-n(\hat y_n(\xi)-y(\xi))}\frac1{\varepsilon}\int_{-1}^0 \phi(z)dz\\
& \geq \frac{1}{2\varepsilon},
\end{align*}
where we used in the last step that $\frac12 \leq n(\hat y_n(\xi)-y(\xi))\leq 1$.
This finishes the proof of \eqref{goal1}. 

{\bf (b) The case $y_\xi(\xi)=c>0$.} 
By assumption $y_\xi(\xi)=c>0$ and hence for every $\varepsilon>0$ there exists some $\delta_\varepsilon>0$ such that 
\begin{equation}\label{est:quot2}
(1-\varepsilon)c<\frac{y(\eta)-y(\xi)}{\eta-\xi}<(1+\varepsilon)c \quad \text{ whenever } \vert \eta-\xi\vert \leq \delta_\varepsilon.
\end{equation}
Let $0<\varepsilon<1$ be fixed and define $\gamma_\varepsilon:=(1-\varepsilon)c\delta_\varepsilon$. In addition, let $z\in \Real $ be such that $\vert y(\xi)-z\vert <\gamma_\varepsilon$. We will first show that $\vert G(z)-\xi\vert <\delta_\varepsilon$. Indeed, assume the opposite. Then, due to \eqref{est:quot2}, if $G(z)\geq \xi+\delta_\varepsilon$, we have  
\begin{equation*}
z=y(G(z))\geq y(\xi+\delta_\varepsilon)\geq y(\xi)+\delta_\varepsilon(1-\varepsilon)c=y(\xi)+\gamma_\varepsilon,
\end{equation*}
and, if $G(z)\leq \xi-\delta_\varepsilon$, then
\begin{equation*}
z=y(G(z))\leq y(\xi-\delta_\varepsilon)\leq y(\xi)-\delta_\varepsilon(1-\varepsilon)c= y(\xi)-\gamma_\varepsilon.
\end{equation*}
Together, these estimates contradict $\vert y(\xi)-z\vert <\gamma_\varepsilon$, and hence prove that $\vert G(z)-\xi\vert <\delta_\varepsilon$.

As an immediate consequence, we obtain 
\begin{equation*}
(1-\varepsilon)c<\frac{y(G(z))-y(\xi)}{G(z)-\xi}<(1+\varepsilon)c \quad \text{ whenever }\vert z-y(\xi)\vert <\gamma_\varepsilon,
\end{equation*}
and thus, as $z=y(G(z))$ for all $z\in \Real$, 
\begin{equation}\label{goal22}
\frac1{(1+\varepsilon)c}<\frac{G(z)-\xi}{z-y(\xi)}<\frac{1}{(1-\varepsilon)c} \quad \text{ whenever } \vert z-y(\xi)\vert <\gamma_\varepsilon.
\end{equation}

In view of the above inequality \eqref{goal22}, which will play a key role, we assume without loss of generality that $\frac{2}{n}<\gamma_\varepsilon$ for the rest of this subsection.

The other main ingredient is to establish that $\lim_{n\to \infty} \vert n(\hat y_n(\xi)-y(\xi))\vert =0 $.
We note here that this fast convergence of $\hat y_n(\xi)$ to $y(\xi)$ need not necessarily
hold in points $\xi$ where $y_\xi(\xi) = 0$, cf.~the remark after this proof. 
We will only consider the case $\hat y_n(\xi)\leq y(\xi)$ and leave the other case, which follows the same lines, to the interested reader.
From \eqref{help:0}, we can deduce that 
\begin{align*}
0 &= \int_{-1}^1 \phi(z) \tilde G\Bigl(\hat y_n(\xi)-\frac{z}{n}\Bigr)dz\\ 
& =n\int_{\hat y_n(\xi)-\frac1n}^{\hat y_n(\xi)+\frac1n}  \phi(n(\hat y_n(\xi)-z))\tilde G(z)dz\\
& = n\int_{\hat y_n(\xi)-\frac1n}^{y(\xi)} \phi(n(\hat y_n(\xi)-z))\tilde G(z)dz\\
& \quad + n\int_{y(\xi)}^{\hat y_n(\xi)+\frac1n} \phi(n(\hat y_n(\xi)-z))\tilde G(z)dz\\
& \leq n\int_{\hat y_n(\xi)-\frac1n}^{y(\xi)} \phi(n(\hat y_n(\xi)-z))\frac{z-y(\xi)}{(1+\varepsilon)c}dz\\ 
& \quad + n\int_{y(\xi)}^{\hat y_n(\xi)+\frac1n} \phi(n(\hat y_n(\xi)-z))\frac{z-y(\xi)}{(1-\varepsilon)c}dz\\
& = n\int_{\hat y_n(\xi)-\frac1n}^{\hat y_n(\xi)+\frac1n} \phi(n(\hat y_n(\xi)-z))\frac{z-y(\xi)}{(1+\varepsilon)c} dz\\
& \quad + n\int_{y(\xi)}^{\hat y_n(\xi)+\frac1n} \phi(n(\hat y_n(\xi)-z))\frac{z-y(\xi)}{c}\left[ \frac{1}{1-\varepsilon}-\frac{1}{1+\varepsilon}\right]dz\\ 
& = \frac{\hat y_n(\xi)-y(\xi)}{(1+\varepsilon)c}+n\frac{2\varepsilon}{(1-\varepsilon^2)c}\int_{y(\xi)}^{\hat y_n(\xi)+\frac1n} \phi(n(\hat y_n(\xi)-z))(z-y(\xi))dz\\
& \leq \frac{\hat y_n(\xi)-y(\xi)}{(1+\varepsilon)c}+n\frac{2\varepsilon}{(1-\varepsilon^2)c}\int_{\hat y_n(\xi)}^{\hat y_n(\xi)+\frac1n}\phi(n(\hat y_n(\xi)-z))\frac2n dz\\ 
& \leq \frac{\hat y_n(\xi)-y(\xi)}{(1+\varepsilon)c}+\frac{2\varepsilon}{n(1-\varepsilon^2)c},
\end{align*}
where we used \eqref{goal22}.
Thus
\begin{equation}\label{help:estconvyny}
0\leq y(\xi)-\hat y_n(\xi)\leq \frac{2\varepsilon}{n(1-\varepsilon)},
\end{equation}
which implies that $\lim_{n\to\infty} n (y(\xi)-\hat y_n(\xi))=0$.

Let us return to the term $\hat G_n'(\hat y_n(\xi))$. We have from \eqref{def:Gpr} that
\begin{align*}
\hat G_n'(\hat y_n(\xi))& =n^2 \int_{\hat y_n(\xi)-\frac1n}^{\hat y_n(\xi)+\frac1n} \phi'(n(\hat y_n(\xi)-z))\tilde G(z)dz\\
& = n^2 \int_{\hat y_n(\xi)-\frac1n}^{\hat y_n(\xi)+\frac1n} \phi'(n(\hat y_n(\xi)-z))\frac{z-y(\xi)}{c}dz\\ 
& \quad + n^2 \int_{\hat y_n(\xi)-\frac1n}^{\hat y_n(\xi)+\frac1n} \phi'(n(\hat y_n(\xi)-z))\left[\tilde G(z)-\frac{z-y(\xi)}{c}\right]dz\\ 
& = \frac1c +n^2\int_{\hat y_n(\xi)-\frac1n}^{\hat y_n(\xi)+\frac1n}\phi'(n(\hat y_n(\xi)-z))\left[\tilde G(z)-\frac{z-y(\xi)}{c}\right]dz.
\end{align*}
Thus \eqref{goal2} will follow if we can show that the last term on the right-hand side tends to $0$ as $n\to \infty$. Now observe that \eqref{goal22} implies that 
\begin{equation*}
\Bigl\lvert \tilde G(z)-\frac{z-y(\xi)}{c}\Bigr\rvert =\frac{\lvert z-y(\xi)\rvert}{c} \Bigl\lvert 1-\frac{c \tilde G(z)}{z-y(\xi)}\Bigr\rvert \leq \frac{\varepsilon}{(1-\varepsilon)c}\lvert z-y(\xi)\rvert,
\end{equation*} 
and hence 
\begin{align*}
\Bigl\lvert n^2 \int_{\hat y_n(\xi)-\frac1n}^{\hat y_n(\xi)+\frac1n}&  \phi'(n(\hat y_n(\xi)-z))\Bigl[\tilde G(z)-\frac{z-y(\xi)}{c}\Bigr]dz\Bigr\rvert \\ 
 & \leq \frac{n^2}{c} \frac{\varepsilon}{1-\varepsilon}\int_{\hat y_n(\xi)-\frac1n}^{\hat y_n(\xi)+\frac1n} \lvert \phi'(n(\hat y_n(\xi)-z))\rvert \lvert z-y(\xi)\rvert dz\\
 & \leq n^2\frac{\varepsilon}{(1-\varepsilon)c} \int_{\hat y_n(\xi)-\frac1n}^{\hat y_n(\xi)+\frac1n} \phi'(n(\hat y_n(\xi)-z))(z-y(\xi)) dz\\
& \quad +2 n^2\frac{\varepsilon}{(1-\varepsilon)c}\int_{I_0} \lvert \phi'(n(\hat y_n(\xi)-z))\rvert \lvert z-y(\xi)\rvert dz\\
& =\frac{\varepsilon}{(1-\varepsilon)c}\left(1 + 2 n^2\int_{I_0} \lvert \phi'(n(\hat y_n(\xi)-z))\rvert \lvert z-y(\xi)\rvert dz\right),
\end{align*}
where we used \eqref{goal11}. 

Recall from \eqref{def:I0} that for $z\in I_0$ one has
\begin{equation*}
\hat y_n(\xi)-\vert \hat y_n(\xi)-y(\xi)\vert \leq z\leq \hat y_n(\xi)+\vert \hat y_n(\xi)-y(\xi)\vert 
\end{equation*}
and therefore
\begin{equation*}
\vert z-y(\xi)\vert \leq 2\vert \hat y_n(\xi)-y(\xi)\vert \leq \frac{4\varepsilon}{n(1-\varepsilon)},
\end{equation*}
where we used \eqref{help:estconvyny}.

This implies that 
\begin{align*}
\Bigl\lvert \hat G_n'(\hat y_n(\xi))-\frac1c\Bigr\rvert &  \leq \Bigl\lvert n^2\int_{\hat y_n(\xi)-\frac1n}^{\hat y_n(\xi)+\frac1n} \phi'(n(\hat y_n(\xi)-z))\Bigl[\tilde{G}(z)-\frac{z-y(\xi)}{c}\Bigr]dz\Bigr\rvert\\
& \leq \frac{\varepsilon}{(1-\varepsilon)c}\left(1+\frac{8\varepsilon}{(1-\varepsilon)} n\int_{I_0} \lvert \phi'(n(\hat y_n(\xi)-z))\rvert dz\right)\\
& \leq \frac{\varepsilon}{(1-\varepsilon)c}\left(1+\frac{16\varepsilon}{1-\varepsilon} \phi(0)\right).
\end{align*}

Since $\varepsilon>0$ can be chosen arbitrarily small, this implies that $\hat G_n'(\hat y_n(\xi))\to \frac1c$ as $n\to \infty$.

To summarise, we have in this step shown that $\hat{G}_n'(\hat{y}_n(\xi))$ converges to $1/y_\xi(\xi)$
in every point $\xi \in \Real$ where $y$ is differentiable. Thus also $\tilde{h}_n(\xi)$ converges
to $h(\xi)$ in all of these points. Together with the fact that $\lVert\tilde{h}_n\rVert_{L^1} =\lVert h\rVert_{L^1}$ 
for all $n$ (see~\eqref{eq:tildehnynNorm}), this shows that
$\lVert \tilde{h}_n-h\rVert_{L^1} \to 0$.

{\bf Step 4. Convergence of $\tilde h_n$ to $h$ in $L^2(\Real)$.}
Recall that 
\begin{equation}\label{eq:1-}
\tilde h_n=1-\hat y_{n,\xi} \quad \text{ and }\quad  h=1-y_\xi.
\end{equation}
Since  $\tilde h_n$, $\hat y_{n,\xi}$, $h$, and $y_\xi$ all are non-negative, it follows that 
\begin{equation*}
0\leq \tilde h_n(\xi)\leq 1\quad \text{ and } \quad 0\leq h(\xi)\leq 1 \quad \text{ for all }\xi\in\Real.
\end{equation*}
Thus we have 
\begin{equation*}
\bnorm{\tilde h_n-h}_{L^2}^2\leq \bnorm{\tilde h_n-h}_{L^1}.
\end{equation*}
Since we already know that $\tilde h_n\to h$ in $L^1(\Real)$, the claim follows.

{\bf Step 5. Convergence of $\hat y_{n,\xi}-1$ to $y_\xi-1$ in $L^1(\Real)\cap L^2(\Real)$.}
By definition we have 
\begin{equation*}
\tilde h_n=1-\hat y_{n,\xi}\quad \text{ and } \quad h=1-y_\xi.
\end{equation*}
Since $\tilde h_n\to h$ both in $L^1(\Real)$ and $L^2(\Real)$ the claim follows.

{\bf Step 6. Convergence of $\hat U_n$ to $U$ in $L^2(\Real)$.}
A proof can be found in \cite[Prop.~5.1]{HR}. 

{\bf Step 7. Convergence of $\hat U_{n,\xi}$ to $U_\xi$ in $L^2(\Real)$.}

Let $S=\{\xi\in\Real \mid y_\xi(\xi)=0\}$. Then $U_\xi(\xi)=0$ for almost all $\xi\in S$, 
since $U^2_\xi=hy_\xi$ almost everywhere. Thus we have
\begin{equation}\label{eq:11}
 \bnorm{\hat U_{n,\xi}-U_\xi}_{L^2}^2=\int_S \hat U_{n,\xi}^2(\xi)d\xi+\int_{S^c} (\hat U_{n,\xi}-U_\xi)^2(\xi)d\xi.
\end{equation}
From \eqref{eq:tildehnyn} and the fact that $\bar{\hat{r}}_n\geq 0$,
it follows that we have for almost every $\xi\in S$ that
\begin{equation*}
\hat U_{n,\xi}^2(\xi)\leq \tilde h_n(\xi)\hat y_{n,\xi}(\xi)=\tilde h_n(\xi)(\hat y_{n,\xi}-y_\xi)(\xi)=\tilde h_n(\xi)(h-\tilde h_n)(\xi),
\end{equation*}
and, therefore, 
\begin{equation*}
 \int_S \hat U_{n,\xi}^2(\xi)d\xi\leq \bnorm{h-\tilde h_n}_{L^1},
\end{equation*}
since $\bnorm{\tilde h_n}_{L^\infty}\le1$. Thus the first integral in \eqref{eq:11} tends to $0$ as $n\to\infty$.

As far as the integral over $S^c$ is concerned, the proof of the convergence follows closely the one of $\bar r_n\to \bar r$ in $L^2(\Real)$ as $n\to \infty$ in \cite[Lemma 6.4]{GHR4}, which we reproduce here for completeness. Note that by definition we have  $\hat U_{n,\xi}(\xi)=u_{n,x}(\hat y_n(\xi))\hat y_{n,\xi}(\xi)$ and 
$U_\xi(\xi)=u_x(y(\xi))y_\xi(\xi)$ for almost every $\xi\in S^c$, so that 
\begin{align}\label{est:stab2}
 \norm{U_{n,\xi}-U_\xi}_{L^2(S^c)}^2 & = \norm{(u_{n,x}\circ \hat{y}_n)\hat{y}_{n,\xi}-(u_x\circ y)y_\xi}_{L^2(S^c)}^2\\  \nn
& = \int_{S^c} (u_{n,x}\circ \hat{y}_n)^2\hat{y}_{n,\xi}(\hat{y}_{n,\xi}-y_\xi)d\xi\\ \nn
&\quad +\int_{S^c} (u_{n,x}\circ \hat{y}_n)\hat{y}_{n,\xi}(u_{n,x}\circ \hat{y}_n-u_{n,x}\circ y)y_\xi d\xi\\ \nn
&\quad +\int_{S^c} (u_{n,x}\circ \hat{y}_n)\hat{y}_{n,\xi}(u_{n,x}\circ y-u_x\circ y)y_\xi d\xi\\ \nn
&\quad +\int_{S^c} (u_x\circ y)^2y_\xi(y_\xi-\hat{y}_{n,\xi})d\xi\\ \nn
&\quad +\int_{S^c} (u_x\circ y)y_\xi(u_x\circ y-u_x\circ \hat{y}_n)\hat{y}_{n,\xi}d\xi\\ \nn
&\quad +\int_{S^c} (u_x\circ y)y_\xi(u_x\circ \hat{y}_n-u_{n,x}\circ \hat{y}_n)\hat{y}_{n,\xi}d\xi.
\end{align}
The first and the fourth term have the same structure, and we therefore only treat the first one. Since  $(u_{n,x}\circ \hat{y}_n)^2\hat{y}_{n,\xi}\leq \tilde h_n\leq 1$, we have 
\begin{equation*}
\norm{(u_{n,x}\circ \hat{y}_n)^2\hat{y}_{n,\xi}( \hat{y}_{n,\xi}-y_{\xi})}_{L^1(S^c)}\leq \norm{y_{\xi}-\hat{y}_{n,\xi}}_{L^1}
\end{equation*}
and thus this term tends to $0$ as $n\to\infty$. 
In order to investigate the fifth term we will use that $u_x\in L^2(\Real)$ and therefore there exists for any $\varepsilon>0$ a continuous function $\tilde l$ with compact support such that $\bnorm{u_x-\tilde l}_{L^2}\leq \varepsilon /(3\max(1,\norm{u_x}_{L^2}))$.
Thus we can write
\begin{multline}\label{eq:stab2_fifth}
 \bnorm{(u_x\circ y)y_{\xi}(u_x\circ y-u_x\circ \hat{y}_n)\hat{y}_{n,\xi}}_{L^1(S^c)}\\
\begin{aligned}
  &\leq \bnorm{(u_x\circ y)y_{\xi}(u_x\circ y-\tilde  l\circ y)\hat{y}_{n,\xi}}_{L^1(S^c)}\\ 
  & \quad +\bnorm{(u_x\circ y)y_{\xi}(\tilde l\circ y-\tilde l\circ \hat{y}_n)\hat{y}_{n,\xi}}_{L^1(S^c)}\\
  &\quad +\bnorm{(u_x\circ y)y_{\xi}(\tilde l\circ \hat{y}_n-u_x\circ \hat{y}_n)\hat{y}_{n,\xi}}_{L^1(S^c)} \\ 
  & \leq \norm{u_x}_{L^2}\Bigl(2 \bnorm{u_x-\tilde l}_{L^2}+\bnorm{\tilde l\circ \hat{y}_n-\tilde l\circ y}_{L^2}\Bigr).
\end{aligned}
\end{multline}
Here we have used in the last inequality that both $y_\xi$ and $\hat{y}_{n,\xi}$ are non-negative and
bounded above by 1.
 Since $\hat{y}_n - y \to 0$ in $L^\infty(\Real)$ and $\tilde l$ is continuous with compact support, we obtain by Lebesgue's dominated convergence theorem that $\tilde l\circ \hat{y}_n\to \tilde l\circ y$ in $L^2(\Real)$. In particular, we can choose $n$ large enough so that $\norm{(u_x\circ y)y_{\xi}(u_x\circ y-u_x\circ \hat{y}_n)\hat{y}_{n,\xi}}_{L^1(S^c)}\leq \varepsilon$. Since $\varepsilon$ can be chosen arbitrarily small, we  obtain in particular that  
\begin{equation}
 \lim_{n\to\infty}\bnorm{(u_x\circ y)y_{\xi}(u_x\circ y-u_x\circ \hat{y}_n)\hat{y}_{n,\xi}}_{L^1(S^c)}=0.
\end{equation}
For the convergence of the second term, we estimate, using again that $y_\xi$ is bounded by 1,
\begin{multline*}
\bnorm{(u_{n,x}\circ \hat{y}_n)\hat{y}_{n,\xi}(u_{n,x}\circ \hat{y}_n-u_{n,x}\circ y)y_\xi}_{L^1(S^c)} \\
\begin{aligned}
&\le \bnorm{(u_{n,x}\circ \hat{y}_n)\hat{y}_{n,\xi}(u_{n,x}\circ \hat{y}_n-u_x\circ \hat{y}_n)y_\xi}_{L^1(S^c)} \\
&\quad+\bnorm{(u_{n,x}\circ \hat{y}_n)\hat{y}_{n,\xi}(u_x\circ \hat{y}_n-u_x\circ y)y_\xi}_{L^1(S^c)} \\
&\quad+\bnorm{(u_{n,x}\circ \hat{y}_n)\hat{y}_{n,\xi}(u_x\circ y-u_{n,x}\circ y)y_\xi}_{L^1(S^c)} \\
&\le \Bigl(\int_{S^c} (u_{n,x}\circ \hat{y}_n)^2 \hat{y}_{n,\xi}\,d\xi\Bigr)^{1/2}
\Bigl(\int_{S^c} (u_{n,x}\circ\hat{y}_n-u_x\circ\hat{y}_n)^2\hat{y}_{n,\xi}\,d\xi\Bigr)^{1/2}\\
&\quad+ \bnorm{(u_{n,x}\circ \hat{y}_n)\hat{y}_{n,\xi}(u_x\circ \hat{y}_n-u_x\circ y)y_\xi}_{L^1(S^c)}\\
&\quad+ \bnorm{(u_{n,x}\circ \hat{y}_n)\hat{y}_{n,\xi}}_{L^2}\bnorm{(u_x\circ y - y_{n,x}\circ y)y_\xi}_{L^2}.
\end{aligned}
\end{multline*}
The first and third term in this last estimate tend to zero because $u_{n,x} \to u_x \in L^2(\Real)$
and both $y_\xi$ and $\hat{y}_{n,\xi}$ are uniformly bounded, and for
the convergence of the second term we can use the same method as in~\eqref{eq:stab2_fifth}.
Thus also the second term in~\eqref{est:stab2} tends to zero.
As far as the third (and, similarly, the last) term in~\eqref{est:stab2} is concerned, we have that
\begin{align*}
& \norm{(u_{n,x}\circ \hat{y}_n)\hat{y}_{n,\xi}(u_{n,x}\circ y-u_x\circ y)y_\xi}_{L^1(S^c)}\\
& \qquad \qquad \leq \norm{(u_{n,x}\circ \hat{y}_n) \hat{y}_{n,\xi}}_{L^2(S^c)}\norm{(u_{n,x}\circ y-u_x\circ y)y_\xi}_{L^2(S^c)}\\
& \qquad \qquad \leq \norm{u_{n,x}}_{L^2}\norm{u_{n,x}-u_x}_{L^2},
\end{align*}
which again tends to zero since by assumption $u_{n,x}\to  u_x\in L^2(\Real)$.
Hence all terms in \eqref{est:stab2} tend to $0$ as $n\to\infty$ and therefore $\hat U_{n,\xi}\to U_\xi$ in $L^2(\Real)$.
 
{\bf Step 8. Convergence of $\bar{\hat {r}}_n$ to zero in $L^2(\Real)$.} By construction, we have $\bar {\hat{r}}_n\geq 0$ since 
  $\bar \rho_n\geq 0$ and $\hat {y}_{n,\xi}\geq 0$. Hence, by \eqref{eq:lagcoord3}, \eqref{eq:Ldef2}, \eqref{eq:tildehnyn}, and \eqref{eq:1-}, we have 
  \begin{align*}
 \norm{\bar{\hat{ r}}_n}_{L^2}^2
 & \leq \norm{\bar{\hat{r}}_n^2+\frac2n \bar{\hat{r}}_n\hat y_{n,\xi}}_{L^1}\\ \nn  
 & = \bnorm{\tilde h_n\hat y_{n,\xi} -hy_\xi-\hat U^2_{n,\xi}+U^2_\xi}_{L^1}\\ \nn
& =\bnorm{\tilde h_n-h-\tilde h_n^2+h^2-\hat U_{n,\xi}^2+U_\xi^2}_{L^1}\\ \nn 
& \leq 3\bnorm{h-\tilde h_n}_{L^1}+\bigl(\bnorm{\hat U_{n,\xi}}_{L^2}+\norm{U_{\xi}}_{L^2}\bigr)\bnorm{\hat U_{n,\xi}-U_\xi}_{L^2}\\ \nn
& \leq 3\bnorm{h-\tilde h_n}_{L^1}+2\norm{\mu}\bnorm{\hat U_{n,\xi}-U_\xi}_{L^2}.
\end{align*}
Since $\tilde h_n\to h$ in $L^1(\Real)$ and $\hat U_{n,\xi}\to U_\xi$ in $L^2(\Real)$, the above estimate implies that $\bar{\hat{r}}_n\to 0$ in $L^2(\Real)$ as $n\to \infty$.
  
{\bf Step 9. Convergence of $\hat h_n$ to $h$ in $L^1(\Real)\cap L^2(\Real)$.}
According to \eqref{eq:Ldef2} and \eqref{eq:Ldef2neu}, we have 
\begin{equation*}
\bnorm{\hat h_n-h}_{L^2}\leq \frac2n\norm{\bar{\hat{r}}_n}_{L^2}+\norm{\hat y_{n,\xi}-y_\xi}_{L^2}.
\end{equation*}
Since $\bar{\hat{r}}_n\to 0$ and $\hat y_{n,\xi}-y_\xi\to 0$ in $L^2(\Real)$, this inequality implies that $\hat h_n\to h$ in $L^2(\Real)$. 
As far as the $L^1(\Real)$ convergence is concerned, observe that
\begin{align*}
\hat h_n-h&= \hat h_n (\hat y_{n,\xi}+\tilde h_n)-h(y_\xi+h)\\
& =\hat h_n\hat y_{n,\xi}-hy_\xi +\hat h_n\tilde h_n-h^2 \\
& = \hat U_{n,\xi}^2-U_\xi^2+\bar{\hat{r}}_n^2+\hat h_n \tilde h_n-h^2.
\end{align*}
Here the first equality follows from~\eqref{eq:Ldef2} and~\eqref{eq:Ldef5neu},
and the last equality follows from~\eqref{eq:lagcoord3} and~\eqref{eq:tildehnyn}.
Thus
\begin{align*}
\bnorm{\hat h_n-h}_{L^1}&\leq \bnorm{\hat U_{n,\xi}+U_\xi}_{L^2}\bnorm{\hat U_{n,\xi}-U_\xi}_{L^2}+\norm{\bar{\hat{r}}_n}_{L^2}^2\\ \nn
&\quad +\bnorm{\tilde h_n}_{L^2}\bnorm{\hat h_n-h}_{L^2}+\norm{h}_{L^2}\bnorm{\tilde h_n-h}_{L^2},
\end{align*}
which implies that $\hat h_n\to h$ in $L^1(\Real)$. 

{\bf Step 10. Convergence of $X_n$ to $X$ in $E$.} So far we have shown that $\hat X_n\to X$ in $E$. 
In addition we showed in Step 1 for all $n\in \Natural$ that we can write $\hat X_n=X_n\circ g_n$
with $g_n\in G_{\kappa}$ for some $\kappa > 0$ independent of $n$, or, equivalently
that $\hat X_n \in \F_\kappa$ for all $n \in \Natural$.
Moreover, it is known, see, e.g., \cite[Lemma 4.6]{GHR4}, that the mapping
$\Gamma \colon \F_\kappa \to \F_0$ defined in~\eqref{eq:Gammadef} is continuous.
Thus we also have that $X_n = \Gamma(\hat{X}_n) \to X$ in $E$, which completes the proof.
\end{proof}

\begin{remark}
A closer look at the proof of Theorem~\ref{thm:approxLagran} reveals that we showed that for every $\xi\in \Real$ 
where $y$ is differentiable and $y_\xi>0$, we have 
\begin{equation*}
\lim_{n\to\infty} n(\hat y_n(\xi)-y(\xi))=0.
\end{equation*}
As the following example illustrates, we cannot expect this convergence to hold for almost every $\xi\in\Real$ such that $y_\xi(\xi)=0$.

Consider the following initial data for the CH equation which corresponds to a symmetric/antisymmetric peakon-antipeakon solution, which vanishes at breaking time $t=0$, i.e., 
\begin{equation*}
u(x) = 0 \text{ for all } x \in \Real \quad \text{ and } \quad F(x)=\mu((-\infty,x))=\begin{cases} 0, \quad x<0,\\
\alpha, \quad 0\leq x,
\end{cases}
\end{equation*}
where $\alpha>0$.
Then 
\begin{equation*}
y(\xi)=\begin{cases} 
\xi, & \xi<0,\\
0, & 0\leq\xi\leq\alpha,\\
\xi-\alpha, & \alpha<\xi.
\end{cases}  
\end{equation*}
and especially $y_\xi(\xi)=0$ for all $\xi\in(0,\alpha)$.
For the approximating sequence we know that 
\begin{equation*}
\hat y_n(\xi)+\hat F_n(\hat y_n(\xi))=\xi \quad \text{ for all }\xi\in\Real, 
\end{equation*}
where 
\begin{equation*}
\hat F_n(x)=\int_\Real n\phi(n(x-y))F(y)dy.
\end{equation*}
We are going to show that $\lim_{n\to\infty}n(\hat y_n(\xi)-y(\xi))\not =0$ for any $\xi\in (0,\alpha)$ except $\xi=\frac{\alpha}2$.

Indeed, if we denote
\[
\Phi(x) := \int_{-\infty}^x \phi(y)\,dy,
\]
then we see that
\[
\hat{F}_n(x) = \int_\Real n\phi(n(x-y))F(y)dy
= \alpha\int_0^\infty n\phi(n(x-y))dy
= \alpha\Phi(nx)
\]
for all $x \in \Real$.
Now assume that $0 < \xi < \alpha$.
Then $y(\xi) = 0$ and thus
\[
\xi = \hat{y}_n(\xi) + \hat{F}_n(\hat{y}_n(\xi))
= \hat{y}_n(\xi) + \alpha\Phi(n\hat{y}_n(\xi))
= \hat{y}_n(\xi)-y(\xi) + \alpha\Phi(n(\hat{y}_n(\xi)-y(\xi))).
\]
In Step 2 in the proof of Theorem~\ref{thm:approxLagran}
we have shown that $\hat{y}_n(\xi) \to y(\xi)$.
Taking the limit $n\to \infty$ in the previous equation
therefore implies that
\[
\frac{\xi}{\alpha} = \lim_{n\to\infty} \Phi(n(\hat{y}_n(\xi)-y(\xi))).
\]
Using that $\abs{n(\hat{y}_n(\xi)-y(\xi))} < 1$ for all $n$
and that $\Phi$ is continuously invertible on $(-1,1)$,
it follows that
\[
\lim_{n\to\infty} n(\hat{y}_n(\xi)-y(\xi)) = \Phi^{-1}(\xi/\alpha).
\]
Since $\Phi(0) = 1/2$ and therefore $\Phi^{-1}(1/2) = 0$,
this shows in particular that
the sequence $n(\hat{y}_n(\xi)-y(\xi))$ only converges
to $0$ for $\xi = \alpha/2$.
\end{remark}

\section{Convergence in Lagrangian coordinates implies convergence in Eulerian coordinates} \label{sec:4}

In the previous two sections, we saw that we can approximate any given initial data
$(u,\mu)$ for the CH equation by a sequence of smooth initial data $(u_n,\rho_n,\mu_n)$ for the 2CH system where the measures $\mu_n$ are purely absolutely continuous. Afterwards we saw that this convergence in Eulerian coordinates is transported, via the mapping $L$, to convergence in Lagrangian coordinates. 

In this section we consider the case when we are given a sequence $X_n\in \F_0$ and $X\in \F_0$, such that $X_n\to X$ in $E$. Does $M(X_n)\to M(X)$ in some sense in Eulerian coordinates? Here $M\colon\F_0\to \D$ denotes the mapping from Lagrangian to Eulerian coordinates, which is defined as follows. 

\begin{definition}[{\cite[Thm.~4.10]{GHR4}}]
Given any element $X=(y,U,h,r)\in \F_0$, we define $(u,\rho,\mu)$ as follows\footnote{Here we denote by $y_\#(h\,d\xi)$ the \emph{push-forward} of the measure $h\,d\xi$ by $y$, defined by
$y_\#(h\,d\xi)(A) = \int_{y^{-1}(A)} h(\xi)d\xi$ for all Borel sets $A \subset \Real$.}
\begin{subequations}
\label{eq:umudef}
\begin{align}
\label{eq:umudef1}
u(x)&=U(\xi)\text{ for any }\xi\text{ such that  }  x=y(\xi),\\
\label{eq:umudef2}
\mu&=y_\#(h(\xi)\,d\xi),\\
\label{eq:umudef3}
\bar \rho(x)\,dx&=y_\#(\bar r(\xi)\,d\xi),\\
\label{eq:umudef4}
\rho(x)&=k+\bar\rho(x),
\end{align}
\end{subequations}
where $k$ is implicitly given through the relation $r(\xi)=\bar r(\xi)+ky_\xi(\xi)$ for all $\xi\in\Real$.
We have that $(u,\rho,\mu)$ belongs to $\D$ and, in particular, that the measure $y_{\#}(\bar r(\xi)\, d\xi)$ is absolutely continuous. We
denote by $M\colon \F_0\rightarrow\D$ the mapping
which to any $X$ in $\F_0$ associates the element $(u, \rho,\mu)\in \D$ as given
by \eqref{eq:umudef}.
\end{definition}

We recall from Definition~\ref{def:F}
that for any $(y,U,h,r) \in \F_0$ we have that $y_\xi \ge 0$
and $U_\xi = 0$ whenever $y_\xi = 0$, or, in other words, that $U$ is
constant whenever the increasing function $y$ is constant. As a
consequence, the value $U(\xi)$ is uniquely determined by $y(\xi)$,
which means that the definition of the function $u$
in~\eqref{eq:umudef1} is independent of the choice of $\xi$ satisfying
$x = y(\xi)$. 
Also, the fact that $y$ is Lipschitz continuous
(see~\eqref{eq:lagcoord1}) implies that the push-forward of the
absolutely continuous measure $\bar r(\xi)\,d\xi$ under $y$ is again 
absolutely continuous, cf. \cite[Thm.~4.10]{GHR4}.

Moreover, we consider the following notion of sequential convergence
on $\D$.

\begin{definition}\label{def:convD}
  We say that a sequence $(u_n,\rho_n,\mu_n)\in \D$ converges
  to $(u,\rho,\mu)\in\D$ as $n\to\infty$ if\footnote{We say that
    $f_n\rightharpoonup f$ if $\int_\Real f_n(x)g(x)dx\to \int_\Real
    f(x)g(x)dx $ for every $g\in L^2(\Real)$.}
  \begin{subequations}
    \begin{align}
      u_n&\to u \in L^2(\Real)\cap L^\infty(\Real),\\
      u_{n,x}& \rightharpoonup u_x,\\
      \bar \rho_n &\rightharpoonup\bar \rho,\\
      \label{assump:EulerLag4}
      k_n&\to k,\\
      \label{assump:EulerLag1}
      \int_\Real
      \frac{u_{n,x}^2(x)}{1+u_{n,x}^2(x)+\bar\rho_n^2(x)}dx&\to\int_\Real
                                                             \frac{u_x^2(x)}{1+u_{x}^2(x)+\bar\rho^2(x)}dx,\\
      \label{assump:EulerLag2}
      \int_\Real
      \frac{\bar\rho_{n}^2(x)}{1+u_{n,x}^2(x)+\bar\rho_n^2(x)}dx&\to
                                                                  \int_\Real
                                                                  \frac{\bar\rho^2(x)}{1+u_x^2(x)+\bar\rho^2(x)}dx,\\
      \label{assump:EulerLag3}
      F_n(x)&\to F(x)\text{ for every $x$ at which $F$ is continuous}, \\
      F_n(\infty)&\to F(\infty),
    \end{align}
  \end{subequations}
  where $F_n(x)=\mu_n((-\infty,x])$ for all $n\in \Natural$ and $F(x)=\mu((-\infty,x])$.  
\end{definition}

With this definition, the convergence result can be stated as follows.

\begin{theorem}\label{thm:LagEuler}
Given a sequence $X_n=(y_n,U_n,h_n,r_n)\in \F_0$ and $X=(y,U,h,r)\in
\F_0$ such that $X_n\to X$ in $E$ as $n\to\infty$, then
$(u_n,\rho_n,\mu_n)=M(X_n)$ converges to $(u,\rho,\mu)=M(X)$ in the
sense of Definition~\ref{def:convD}.
\end{theorem}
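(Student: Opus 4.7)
Overall strategy: transport each Eulerian convergence in Definition~\ref{def:convD} from the Lagrangian hypothesis via the change-of-variables identities valid on $\F_0$: $U_\xi=(u_x\circ y)y_\xi$, $\bar r=(\bar\rho\circ y)y_\xi$, $U_\xi^2+\bar r^2=hy_\xi$, and $y_\xi+h=1$. The starting observation is that these identities collapse the rational integrands of~\eqref{assump:EulerLag1}--\eqref{assump:EulerLag2} exactly: a direct substitution $x=y_n(\xi)$ on $\{y_{n,\xi}>0\}$, combined with $u_{n,x}\circ y_n=U_{n,\xi}/y_{n,\xi}$, $\bar\rho_n\circ y_n=\bar r_n/y_{n,\xi}$, $U_{n,\xi}^2+\bar r_n^2=h_ny_{n,\xi}$ and $y_{n,\xi}+h_n=1$, yields
\[
\int_\Real \frac{u_{n,x}^2}{1+u_{n,x}^2+\bar\rho_n^2}\,dx = \int_\Real U_{n,\xi}^2\,d\xi,
\qquad
\int_\Real \frac{\bar\rho_n^2}{1+u_{n,x}^2+\bar\rho_n^2}\,dx = \int_\Real \bar r_n^2\,d\xi,
\]
(noting that $U_{n,\xi}=\bar r_n=0$ whenever $y_{n,\xi}=0$). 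Thus \eqref{assump:EulerLag1}--\eqref{assump:EulerLag2} reduce to the $L^2$ convergences of $U_{n,\xi}$ and $\bar r_n$, which are hypotheses; $k_n\to k$ is also immediate.

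For the uniform convergence $u_n\to u$, I would use that each $y_n$ is $1$-Lipschitz (since $0\le y_{n,\xi}\le 1$ on $\F_0$) and surjective ($y_n-\id\in L^\infty$), so for any $x$ there is a $\xi$ with $y(\xi)=x$; the decomposition
\[
|u_n(x)-u(x)|\le |u_n(x)-u_n(y_n(\xi))|+|U_n(\xi)-U(\xi)|
\]
gives convergence because the first term is bounded by $\norm{u_{n,x}}_{L^2}\norm{y-y_n}_{L^\infty}^{1/2}$ via the $H^1\hookrightarrow C^{1/2}$ embedding (with $\norm{u_{n,x}}_{L^2}^2\le \norm{h_n}_{L^1}$ uniformly bounded) and the second vanishes by the $H^1$ convergence of $U_n$. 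Strong $L^2$ convergence then follows from norm convergence $\norm{u_n}_{L^2}^2=\int U_n^2 y_{n,\xi}\,d\xi \to \int U^2 y_\xi\,d\xi=\norm{u}_{L^2}^2$ (which uses $U^2\in L^2\cap L^\infty$ together with the decomposition $U_n^2y_{n,\xi}-U^2y_\xi=(U_n^2-U^2)y_{n,\xi}+U^2(y_{n,\xi}-y_\xi)$), combined with the uniform convergence via a Brezis--Lieb-type argument. For the weak convergences $u_{n,x}\rightharpoonup u_x$ and $\bar\rho_n\rightharpoonup \bar\rho$, the push-forward identity
\[
\int_\Real u_{n,x}(x)\phi(x)\,dx = \int_\Real U_{n,\xi}(\xi)\phi(y_n(\xi))\,d\xi
\]
(and the analogous one with $\bar r_n$ in place of $U_{n,\xi}$) for $\phi\in C_c(\Real)$ converges to $\int U_\xi\phi(y)\,d\xi=\int u_x\phi\,dx$ using $L^2$ convergence of $U_{n,\xi}$ and the uniform convergence $\phi(y_n)\to\phi(y)$ (from continuity of $\phi$ and $y_n\to y$ uniformly); density and the uniform $L^2$ bound on $u_{n,x}$ extend this to $\phi\in L^2$.

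The main obstacle is the pointwise convergence of $F_n$ at continuity points of $F$. I would decompose
\[
F_n(x)-F(x) = \int_\Real \chi_{(-\infty,x]}(y_n)(h_n-h)\,d\eta + \int_\Real\bigl(\chi_{(-\infty,x]}(y_n)-\chi_{(-\infty,x]}(y)\bigr)h\,d\eta,
\]
where the second integral vanishes by dominated convergence: the integrand converges pointwise to zero on $\{y\ne x\}$ by uniform convergence of $y_n$, and $h=0$ on $\{y=x\}$ precisely because $F$ continuous at $x$ means $\mu(\{x\})=\int_{y^{-1}(\{x\})}h\,d\eta=0$. Handling the first integral requires $\norm{h_n-h}_{L^1}\to 0$, which is not directly given. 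I would obtain this by noting that in $\F_0$ one has $H=-\zeta$ (since $y+H=\id$ and $y=\id+\zeta$), so $\norm{h_n}_{L^1}=\zeta_n(-\infty)-\zeta_n(+\infty)\to \zeta(-\infty)-\zeta(+\infty)=\norm{h}_{L^1}$ by the uniform convergence of $\zeta_n$; combined with $h_n,h\ge 0$ and a.e.\ convergence extracted along subsequences from $L^2$ convergence, Scheff\'e's lemma gives $L^1$ convergence of every subsequence, hence of the whole sequence. Finally, $F_n(\infty)\to F(\infty)$ is precisely $\norm{h_n}_{L^1}\to\norm{h}_{L^1}$.
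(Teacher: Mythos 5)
Your proposal is correct, and while its backbone (transporting everything through the change of variables $x=y_n(\xi)$ using $U_{n,\xi}=(u_{n,x}\circ y_n)\,y_{n,\xi}$, $\bar r_n=(\bar\rho_n\circ y_n)\,y_{n,\xi}$, $y_{n,\xi}=\bigl(1+u_{n,x}^2\circ y_n+\bar\rho_n^2\circ y_n\bigr)^{-1}$ on $\{y_{n,\xi}>0\}$, and $\meas\bigl(y_n(\{y_{n,\xi}=0\})\bigr)=0$) is the same as the paper's, several steps take a genuinely different and in part shorter route. For \eqref{assump:EulerLag1}--\eqref{assump:EulerLag2} the paper works with $S=\{y_\xi=0\}$ and $B_n=y_n(S)$, proves $\meas(B_n)\to0$, and compares $\int_{B_n^c}$ with the full integral; you instead use the exact identities $\int_\Real U_{n,\xi}^2\,d\xi=\int_\Real u_{n,x}^2/(1+u_{n,x}^2+\bar\rho_n^2)\,dx$ and its $\bar r_n$-analogue, which reduce these items to convergence of the $L^2$-norms of $U_{n,\xi}$ and $\bar r_n$ given by the hypothesis $X_n\to X$ in $E$; this is legitimate (the paper itself uses precisely these identities in Step~2 of the proof of Theorem~\ref{thm:EulerLag}) and avoids the $B_n$-argument entirely. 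For the convergence of $F_n$ at continuity points of $F$, the paper tests against $C_c^\infty$ functions (using $\psi\circ y_n\to\psi\circ y$ and $h_n\to h$ in $L^2$) and invokes the equivalence of weak convergence with pointwise convergence of distribution functions from Folland, thereby never needing $L^1$-convergence of $h_n$; you first upgrade to $\norm{h_n-h}_{L^1}\to0$ (total masses converge since $H_n=-\zeta_n$ on $\F_0$ and $\zeta_n\to\zeta$ uniformly, then Scheff\'e along a.e.-convergent subsequences) and conclude by your decomposition and dominated convergence, using that $h=0$ a.e.\ on $\{y=x\}$ when $\mu(\{x\})=0$; this is more self-contained at the price of the extra $L^1$ step. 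For $u_n\to u$ in $L^\infty$ the paper cites \cite[Thm.~6.5]{GHR4}, whereas your H\"older-type estimate $|u_n(y(\xi))-u_n(y_n(\xi))|\le\norm{u_{n,x}}_{L^2}\norm{y_n-y}_{L^\infty}^{1/2}$ together with $\norm{U_n-U}_{L^\infty}\le C\norm{U_n-U}_{H^1}$ is a valid replacement, provided you record that $\norm{u_{n,x}}_{L^2}^2\le\mu_n(\Real)=\norm{h_n}_{L^1}$ is uniformly bounded (e.g.\ via $h_n=h_n^2+U_{n,\xi}^2+\bar r_n^2$, or your $\zeta_n$-argument). The remaining steps coincide with the paper's: your ``Brezis--Lieb-type'' conclusion for $u_n\to u$ in $L^2$ is in substance the paper's Radon--Riesz argument (uniform convergence plus the uniform $L^2$-bound gives weak convergence, and norm convergence upgrades it), and the weak convergences of $u_{n,x}$ and $\bar\rho_n$ via the push-forward identity and density, as well as $k_n\to k$, are exactly the paper's Steps~4, 5, and~8.
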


\begin{proof} The proof is divided into 8 steps for convenience.

{\bf Step 1. Convergence of $u_n$ to $u$ in $L^\infty(\Real)$.} For a proof we refer the interested reader to \cite[Thm.~6.5]{GHR4}.

{\bf Step 2.  Convergence of $u_n$ to $u$ in $L^2(\Real)$.} If we can show that the assumptions of the Radon--Riesz theorem are fulfilled, see, e.g., \cite[Thm.~1.37]{Ambrosio}, the claim follows. Thus we have to show that $\norm{u_n}_{L^2}\to\norm{u}_{L^2}$ and that $u_n$ converges weakly to $u$ as $n\to\infty$.

A straightforward computation using \eqref{eq:umudef1} yields
\begin{equation}\label{eq:17}
\norm{u_n}_{L^2}^2=\int_\Real (U_n^2-U^2)y_{n,\xi}(\xi)d\xi+\int_\Real U^2(y_{n,\xi}-y_\xi)(\xi)d\xi+\norm{u}_{L^2}^2,
\end{equation}
where we have used that $U^2y_\xi(\xi)=0$ whenever $y_\xi(\xi)=0$, and similarly that $U^2_ny_{n,\xi}(\xi)=0$ whenever $y_{n,\xi}(\xi)=0$. 
Applying the Cauchy--Schwarz inequality to the first and second term on the right-hand side of \eqref{eq:17} yields that 
\begin{align*}
\Bigl\lvert \int_\Real (U_n^2-& U^2) y_{n,\xi}(\xi)d\xi\Bigr\rvert + \Bigl\lvert \int_\Real U^2(y_{n,\xi}-y_\xi)(\xi)d\xi\Bigr\rvert \\
& \leq (\norm{U_n-U}_{L^2}+2\norm{U}_{L^2})\norm{U_n-U}_{L^2}+\norm{U}_{L^\infty}\norm{U}_{L^2}\norm{y_{n,\xi}-y_\xi}_{L^2},
\end{align*}
where we used that $0\leq y_{n,\xi}\leq 1$. Since $U_n\to U$ in
$L^2(\Real)$ and $\zeta_{n,\xi}\to \zeta_\xi$ in $L^2(\Real)$, we
obtain from~\eqref{eq:17} that $\norm{u_n}_{L^2}\to \norm{u}_{L^2}$ as
$n\to \infty$.

Since $C_c^\infty(\Real)$ is dense in $L^2(\Real)$ and $\norm{u_n}_{L^2}\to \norm{u}_{L^2}$, it suffices to show that 
\begin{equation}\label{eq:19}
 \lim_{n\to\infty} \int_\Real u_n\psi(x)dx=\int_\Real u\psi(x)dx
\end{equation}
for all $\psi\in C_c^\infty(\Real)$. 
This however follows immediately, as
\[
\abs{\int_\Real (u_n-u)\psi(x)dx}
\le \norm{u_n-u}_{L^\infty}\norm{\psi}_{L^1}
\to 0
\]
according to Step~1.

{\bf Step 3. Convergence of $F_n(x)$ to $F(x)$ for all $x$ at which $F(x)$ is continuous.} According to \cite[Props.~7.19 and 8.17]{Folland}, 
this is equivalent to showing that 
\begin{equation}\label{eq:21}
 \lim_{n\to\infty}\int_\Real \psi(x)d\mu_n(x)=\int_\Real \psi(x)d\mu(x)
\end{equation}
for all $\psi\in C_c^\infty(\Real)$. It follows from \eqref{eq:umudef2} that 
\begin{equation}\label{eq:22}
 \int_\Real \psi d\mu_n(x)=\int_\Real \psi(y_n(\xi))h_n(\xi)d\xi,
\end{equation}
and
\begin{equation}\label{eq:23}
 \int_\Real \psi d\mu(x)=\int_\Real \psi(y(\xi))h(\xi)d\xi.
\end{equation}
Since $y_n-\id\to y-\id$ in $L^\infty(\Real)$, the support of $\psi\circ y_n$ is contained in some compact set which can be chosen independently of $n$, and, from Lebesgue's dominated convergence theorem, we have that $\psi\circ y_n\to\psi\circ y$ in $L^2(\Real)$. Hence, since $h_n\to h$ in $L^2(\Real)$,
\begin{equation*}
 \lim_{n\to\infty} \int_\Real \psi(y_n(\xi))h_n(\xi)d\xi=\int_\Real \psi(y(\xi))h(\xi)d\xi,
\end{equation*}
and \eqref{eq:21} follows from \eqref{eq:22} and \eqref{eq:23}.

Note that, in particular, $\mu_n(\Real)\to\mu(\Real)$ as $n\to\infty$, since $\norm{h_n}_{L^1}\to \norm{h}_{L^1}$ by assumption. Moreover, 
\begin{equation*}
\mu_{n,{\rm ac}}=(u_{n,x}^2+\bar\rho_n^2)dx \quad \text{and} \quad \mu_{\rm ac}=(u_x^2+\bar\rho^2)dx,
\end{equation*}
which implies 
\begin{equation}\label{fellesest}
\norm{u_{n,x}}_{L^2}^2+\norm{\bar\rho_n}_{L^2}^2\leq\mu_n(\Real) \quad  \text{and} \quad \norm{u_x}_{L^2}^2+\norm{\bar\rho}_{L^2}^2\leq\mu(\Real),
\end{equation}
and hence $u_{n,x}$, $u_x$, $\bar\rho_n$ and $\bar\rho$ belong to $L^2(\Real)$.

{\bf Step 4. Weak convergence of $u_{n,x}$ to $u_x$.}
Since $C_c^\infty(\Real)$ is dense in $L^2(\Real)$ and $\norm{u_{n,x}}_{L^2}$ and $\norm{u_x}_{L^2}$ can be uniformly bounded according to \eqref{fellesest}, it suffices to show that 
\begin{equation*}
\lim_{n\to\infty} \int_\Real u_{n,x}(x)\psi(x)dx=\int_\Real u_x(x)\psi(x)dx
\end{equation*} 
for all $\psi\in C_c^\infty(\Real)$. 
To that end, observe that 
\begin{equation*}
\int_\Real u_{n,x}(x)\psi(x)dx =\int_\Real U_{n,\xi}(\xi)\psi(y_n(\xi))d\xi,
\end{equation*}
since $U_{n,\xi}(\xi)=0$ for all $\xi\in\Real$ such that $y_{n,\xi}(\xi)=0$,
and 
\begin{equation*}
\int_\Real u_x(x)\psi(x)dx=\int_\Real U_\xi(\xi)\psi(y(\xi))d\xi.
\end{equation*}
Thus it suffices to show that 
\begin{equation}\label{equiv:weak}
\lim_{n\to\infty} \int_\Real U_{n,\xi}(\xi)\psi(y_n(\xi))d\xi=\int_\Real U_\xi(\xi)\psi(y(\xi))d\xi,
\end{equation}
for all $\psi\in C_c^\infty(\Real)$. By assumption we have that $U_{n,\xi}\to U_\xi$ in $L^2(\Real) $ and $y_n-\id\to y-\id$ in $L^\infty(\Real)$ and hence the support of $\psi(y_n(\xi))$ and $\psi(y(\xi))$ is contained in some compact set that can be chosen independent of $n$. Thus $\psi(y_n(\xi))\to \psi(y(\xi))$ in $ L^\infty(\Real)\cap L^2(\Real)$, and Lebesgue's dominated convergence theorem implies \eqref{equiv:weak}.

{\bf Step 5. Weak convergence of $\bar\rho_n$ to $\bar\rho$.}
The argument closely follows the one of $u_{n,x}$ convergences weakly to $u_x$ in Step 4.

{\bf Step 6. Convergence of $\int_\Real \frac{u_{n,x}^2(x)}{1+u_{n,x}^2(x)+\bar\rho_n^2(x)}dx$ to $\int_\Real \frac{u_x^2(x)}
{1+u_{x}^2(x)+\bar\rho^2(x)}dx$.} 

Let $S=\{\xi\in\Real\mid y_\xi(\xi)=0\}$ and $S_n=\{\xi\in\Real\mid y_{n,\xi}(\xi)=0\}$. Furthermore, let $B_n=y_n(S)$. Then we claim that $\meas(B_n)\to 0$ as $n\to\infty$.

By definition, we have that $S=\{\xi\in\Real\mid y_\xi(\xi)=0\}=\{\xi\in\Real\mid h(\xi)=1\}$, which implies that $\meas(S)\leq \norm{h}_{L^1}$. Thus 
\begin{align*}
 \meas(B_n)=\meas(y_n(S))&=\int_{y_n(S)} dx\\ \nn 
& =\int_Sy_{n,\xi}(\xi)d\xi=\int_S \big(y_{n,\xi}(\xi)-y_\xi(\xi)\big)d\xi\\ \nn 
& \leq \meas(S)^{1/2}\norm{y_{n,\xi}-y_{\xi}}_{L^2}\\ \nn 
& \leq \norm{h}_{L^1}^{1/2}\norm{y_{n,\xi}-y_\xi}_{L^2}.
\end{align*}

By assumption $y_{n,\xi}-y_\xi\to 0$ in $L^2(\Real)$, and hence $\meas(B_n)$ tends to $0$ as $n\to\infty$. Moreover, 
\begin{equation*}
U_{n,\xi}(\xi)=0 \text{ for }\xi\in S_n \quad \text{ and }\quad U_\xi(\xi)=0 \text{ for }\xi\in S.
\end{equation*}
As far as $y_{n,\xi}(\xi)$ and $y_\xi(\xi)$ are concerned, we have the representations
\begin{equation*}
y_{n,\xi}(\xi)=\frac{1}{1+u_{n,x}^2(y_n(\xi))+\bar\rho_n^2(y_n(\xi))}\quad \text{ for almost every }\xi\in S_n^c
\end{equation*}
and 
\begin{equation*}
y_\xi(\xi)=\frac{1}{1+u_x^2(y(\xi))+\bar\rho^2(y(\xi))}\quad \text{ for almost every }\xi\in S^c.
\end{equation*}
This means, in particular, that 
\begin{align*}
 \int_{S^c} U_{n,\xi}^2(\xi)d\xi& =\int_{S^c\cap S^c_n} U_{n,\xi}^2(\xi)d\xi\\ \nn
 &=\int_{S^c\cap S^c_n} u_{n,x}^2(y_n(\xi))y_{n,\xi}^2(\xi)d\xi\\ \nn 
& =\int_{S^c\cap S^c_n} \frac{u_{n,x}^2(y_n(\xi))}{1+u_{n,x}^2(y_n(\xi))+\bar\rho_n^2(y_n(\xi))}y_{n,\xi}(\xi)d\xi\\ \nn 
& =\int_{B_n^c\cap y_n(S_n^c)}\frac{u_{n,x}^2(x)}{1+u_{n,x}^2(x)+\bar\rho_n^2(x)}dx\\ \nn
& =\int_{B_n^c}\frac{u_{n,x}^2(x)}{1+u_{n,x}^2(x)+\bar\rho_n^2(x)}dx,
\end{align*}
since $y_n$ is Lipschitz continuous and therefore
$\meas(y_n(S_n))=0$ for all $n\in\Natural$.
Similarly, one obtains
\begin{equation*}
 \int_{S^c}U_{\xi}^2(\xi)d\xi=\int_\Real \frac{u_x^2(x)}{1+u_x^2(x)+\bar\rho^2(x)}dx,
\end{equation*}
as $\meas(y(S))=0$.
Since $U_{n,\xi}\to U_{\xi}$ in $L^2(\Real)$ we find that
\begin{equation*}
 \lim_{n\to\infty}\int_{B_n^c}\frac{u_{n,x}^2(x)}{1+u_{n,x}^2(x)+\bar\rho_n^2(x)}dx=\int_\Real \frac{u_x^2(x)}{1+u_x^2(x)+\bar\rho^2(x)}dx.
\end{equation*}
Furthermore, note that $\frac{u_{n,x}^2(x)}{1+u_{n,x}^2(x)+\bar\rho_n^2(x))}$ is uniformly bounded by $1$ for all $n\in\Natural$ and $x\in\Real$. This means, in particular, that 
\begin{equation*}
 \int_{B_n} \frac{u_{n,x}^2(x)}{1+u_{n,x}^2(x)+\bar\rho_n^2(x)}dx\leq \meas (B_n), 
\end{equation*}
and thus the term on the left-hand side converges to $0$ as $n\to\infty$ since $\meas(B_n)\to 0$. 
Thus we get 
\begin{equation*}
 \lim_{n\to\infty}\int_{\Real}\frac{u_{n,x}^2(x)}{1+u_{n,x}^2(x)+\bar\rho_n^2(x)}dx=\int_\Real \frac{u_x^2(x)}{1+u_x^2(x)+\bar\rho^2(x)}dx.
\end{equation*}

{\bf Step 7. Convergence of $\int_\Real
  \frac{\bar\rho_{n}^2(x)}{1+u_{n,x}^2(x)+\bar\rho_n^2(x)}dx$ to
  $\int_\Real \frac{\bar \rho^2(x)}{1+u_x^2(x)+\bar\rho^2(x)}dx$.}
The argument is similar to the one in Step 6.

{\bf Step 8. Convergence of $k_n$ to $k$.} By definition we have 
\begin{equation*}
r(\xi)=\bar r(\xi)+ky_\xi(\xi)
\end{equation*}
and 
\begin{equation*}
r_n(\xi)=\bar r_n(\xi)+k_ny_{n,\xi}(\xi).
\end{equation*}
By assumption $X_n$ converges to $X$ in $E$, and thus according to \eqref{norm:E}, we infer that $k_n\to k$.

\end{proof}

\begin{remark}
Note that the convergence in Lagrangian coordinates implies in particular that $\bar\rho_n$ converges to $\bar\rho$ weakly.
Thus, in the special case $k=0$ and $r(\xi)=\bar r(\xi)=0$ for all $\xi\in\Real$, we infer that $\bar\rho_n$ converges weakly to zero and $k_n\to 0$. Thus $\rho(x)=0$ for all $x\in\Real$, and hence $(u,\rho,\mu)=(u,0,\mu)$ belongs to the set of Eulerian coordinates for the CH equation. Thus the sequence $\bar \rho_n$ in Theorem~\ref{thm:approxEuler} converges to zero in the weak sense, since all the assumptions in Theorem~\ref{thm:LagEuler} are satisfied due to Theorem~\ref{thm:approxLagran}.
\end{remark}

\section{Convergence in Eulerian coordinates implies convergence in Lagrangian coordinates} \label{sec:5}
In this section we want to show that convergence in Eulerian coordinates implies convergence in Lagrangian coordinates. Due to the definition of Eulerian coordinates, one might guess that it is natural to impose only the condition $u_n\to u$ in $H^1(\Real)$. However, due to Theorem~\ref{thm:LagEuler} we will require a somewhat stronger mode of convergence for $u_{n,x}$ to $u_x$, which in the end yields an equivalence between convergence in Eulerian and Lagrangian coordinates.

\begin{theorem}\label{thm:EulerLag}
Given a sequence $(u_n,\rho_n,\mu_n)\in \D$ and $(u,\rho,\mu)\in\D$
such that $(u_n,\rho_n,\mu_n)$ converges to $(u,\rho,\mu)$ as $n\to
\infty$ in the sense of Definition~\ref{def:convD},
let $X_n=(y_n,U_n,h_n,r_n)=L((u_n,\rho_n,\mu_n))\in \F_0$ and $X=(y,U,h,r)=L((u,\rho,\mu))\in \F_0$.  Then $X_n\to X$ in $E$ as $n\to \infty$.
\end{theorem}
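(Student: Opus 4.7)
The plan is to mimic the structure of Steps 2--10 of the proof of Theorem~\ref{thm:approxLagran}, with the significant simplification that no auxiliary relabeled sequence is needed: both $X_n$ and $X$ already lie in $\F_0$, so $M\circ L=\id$ on the relevant data and all change-of-variables identities apply directly. The backbone of the argument is the Radon--Riesz principle: for each of the $L^2$ components I would establish weak convergence and norm convergence separately, and combine them via the identity $\|f_n-f\|_{L^2}^2=\|f_n\|_{L^2}^2-2\langle f_n,f\rangle+\|f\|_{L^2}^2$.

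First I would establish $\|y_n-y\|_{L^\infty}\to 0$, which is the analogue of Step 2 in the proof of Theorem~\ref{thm:approxLagran}. Introducing $G_n(x)=x+F_n(x)$ and $G(x)=x+F(x)$, and using the characterisations $y_n(\xi)+F_n(y_n(\xi)-)\le\xi\le y_n(\xi)+F_n(y_n(\xi))$ together with the pointwise convergence of $F_n$ at continuity points of $F$, a squeeze argument delivers uniform convergence. The only technical point is that $F_n$ need not be continuous, which is handled by perturbing to nearby continuity points and using monotonicity. From this and $u_n\to u$ in $L^\infty$ I would deduce $U_n\to U$ in $L^\infty$ via uniform continuity of $u$, and I would also deduce weak convergence of measures $\mu_n\rightharpoonup\mu$ (which follows from $F_n\to F$ at continuity points plus $F_n(\infty)\to F(\infty)$).

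Next, the formula $y_{n,\xi}(\xi)=(1+u_{n,x}^2(y_n(\xi))+\bar\rho_n^2(y_n(\xi)))^{-1}$ holding almost everywhere on $S_n^c=\{y_{n,\xi}>0\}$, combined with the change of variables $x=y_n(\xi)$ as performed in Step 6 of the proof of Theorem~\ref{thm:LagEuler}, yields
\begin{equation*}
\|U_{n,\xi}\|_{L^2}^2=\int_\Real\frac{u_{n,x}^2}{1+u_{n,x}^2+\bar\rho_n^2}\,dx,\qquad \|\bar r_n\|_{L^2}^2=\int_\Real\frac{\bar\rho_n^2}{1+u_{n,x}^2+\bar\rho_n^2}\,dx,
\end{equation*}
and analogously for $U$ and $\bar r$. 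Hypotheses \eqref{assump:EulerLag1} and \eqref{assump:EulerLag2} then give norm convergence $\|U_{n,\xi}\|_{L^2}\to\|U_\xi\|_{L^2}$ and $\|\bar r_n\|_{L^2}\to\|\bar r\|_{L^2}$. Weak convergence $U_{n,\xi}\rightharpoonup U_\xi$ in $L^2$ follows from $U_n\to U$ in $L^\infty$ (giving convergence in $\mathcal{D}'(\Real)$) combined with the uniform $L^2$ bound; weak convergence $\bar r_n\rightharpoonup\bar r$ follows from the assumption $\bar\rho_n\rightharpoonup\bar\rho$ and Step A by a push-forward argument testing against $C_c^\infty(\Real)$. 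Radon--Riesz then upgrades both to strong $L^2$ convergence.

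For $h_n$ I would use the identity $h_n^2=h_n-h_n y_{n,\xi}=h_n-U_{n,\xi}^2-\bar r_n^2$, which gives
\begin{equation*}
\|h_n\|_{L^2}^2=\|h_n\|_{L^1}-\|U_{n,\xi}\|_{L^2}^2-\|\bar r_n\|_{L^2}^2,
\end{equation*}
and analogously for $h$. Since $\|h_n\|_{L^1}=F_n(\infty)\to F(\infty)=\|h\|_{L^1}$, Step B yields $\|h_n\|_{L^2}\to\|h\|_{L^2}$. Weak convergence $h_n\rightharpoonup h$ in $L^2$ is obtained from $H_n=\id-y_n\to \id-y=H$ in $L^\infty$ by integrating by parts against $C_c^\infty$ test functions, combined with the uniform $L^2$ bound. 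Radon--Riesz then gives $h_n\to h$ in $L^2$, and $\zeta_{n,\xi}-\zeta_\xi=-(h_n-h)\to 0$ in $L^2$. The convergence $U_n\to U$ in $L^2$ is handled similarly: the identity $\|U_n\|_{L^2}^2=\|u_n\|_{L^2}^2+\int u_n^2\,d\mu_n$ combined with $u_n\to u$ in $L^2$ and $\int u_n^2\,d\mu_n\to\int u^2\,d\mu$ (which uses $u_n^2\to u^2$ uniformly and $\mu_n\rightharpoonup\mu$ with convergent masses) yields norm convergence, and weak convergence follows from $L^\infty$ convergence. Finally $k_n\to k$ is exactly assumption \eqref{assump:EulerLag4}, completing the proof that $X_n\to X$ in $E$.

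The main obstacle will be Step A: turning the weak (pointwise a.e.) information $F_n\to F$ into the uniform bound $\|y_n-y\|_{L^\infty}\to 0$. Once this is in place, everything else reduces to change-of-variables computations combined with the Radon--Riesz mechanism, whose ingredients are already packaged into the hypotheses of Definition~\ref{def:convD}. In particular, conditions \eqref{assump:EulerLag1} and \eqref{assump:EulerLag2} are precisely the norm-convergence hypotheses needed to push through Radon--Riesz in Lagrangian coordinates, which makes it clear a posteriori why these seemingly technical assumptions are built into the definition of convergence on $\D$.
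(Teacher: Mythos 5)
Your proposal is essentially correct and shares the paper's backbone: Radon--Riesz applied componentwise, with the norm convergences supplied exactly as in the paper by the change of variables that turns \eqref{assump:EulerLag1}--\eqref{assump:EulerLag2} into $\norm{U_{n,\xi}}_{L^2}\to\norm{U_\xi}_{L^2}$ and $\norm{\bar r_n}_{L^2}\to\norm{\bar r}_{L^2}$, by the identity $h_n^2=h_n-U_{n,\xi}^2-\bar r_n^2$ together with $\norm{h_n}_{L^1}=F_n(\infty)\to F(\infty)=\norm{h}_{L^1}$ for $h_n$, and with $k_n\to k$ being \eqref{assump:EulerLag4}. Where you genuinely deviate: you obtain the weak $L^2$ convergence of $U_{n,\xi}$ by integrating by parts against $U_n$ (so your argument never uses the hypothesis $u_{n,x}\rightharpoonup u_x$ at that point), whereas the paper writes $\int U_{n,\xi}\psi\,d\xi=\int u_{n,x}\,(\psi\circ y_n^{-1})\,dx$ with the pseudo-inverse $y_n^{-1}=\id+F_n$ and pairs the weakly convergent $u_{n,x}$ with the strongly convergent $\psi\circ y_n^{-1}$; for $\bar r_n$ your push-forward test is the paper's argument, though what it really needs is the a.e.\ convergence of $y_n^{-1}$, which comes from \eqref{assump:EulerLag3} rather than from your Step A. You also give a self-contained Radon--Riesz proof of $U_n\to U$ in $L^2$ via $\norm{U_n}_{L^2}^2=\norm{u_n}_{L^2}^2+\int_\Real u_n^2\,d\mu_n$, where the paper simply cites \cite[Prop.~5.1]{HR}; that identity and the limit $\int_\Real u_n^2\,d\mu_n\to\int_\Real u^2\,d\mu$ are fine.

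The one step that is not justified as you state it is Step A. The squeeze inequality (the paper's \eqref{eq:new}), $\abs{y_n(\xi)-y(\xi)}\le\abs{F_n(y(\xi))-F(y(\xi))}$, together with the perturbation-to-continuity-points device on the plateau intervals, gives only \emph{pointwise} convergence $y_n\to y$; since $F_n\to F$ is not uniform when $\mu$ has atoms, uniformity does not follow from the squeeze. The uniform statement is true, but requires an extra ingredient: either a P\'olya-type argument applied to the nondecreasing functions $H_n=\id-y_n$, which converge pointwise to the continuous limit $H=\id-y$ with $H_n(+\infty)=F_n(\infty)\to F(\infty)=H(+\infty)$, or the paper's ordering, in which the $L^\infty$ bound is deduced only at the end from $\norm{\zeta_n-\zeta}_{L^\infty}\le\norm{h_n-h}_{L^1}$ once the $L^2$ convergences of $h_n$, $U_{n,\xi}$, $\bar r_n$ are in hand. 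Note also that pointwise convergence of $y_n$ (plus uniform convergence of $u_n$ and continuity of $u$) already suffices for your distributional tests involving $U_n$, so you could adopt the paper's ordering and avoid needing uniformity early; if you keep your ordering, the P\'olya-type argument must be made explicit.
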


\begin{proof}  The proof is divided into 7 steps.

{\bf Step 1. The sequence $y_n$ converges pointwise to $y$.}  
Denote
$D=\{\xi\in\Real \mid F \text{ is discontinuous at }y(\xi) \}$. By construction we have for all $\xi\in D^c$ that 
\begin{equation}\label{eq:help1}
F(y(\xi))+y(\xi)=\xi,
\end{equation}
and, in particular,
\begin{equation}\label{eq:help3}
F(y(\xi))=\mu((-\infty,y(\xi)])=\mu((-\infty, y(\xi))).
\end{equation}
As far as $y_n(\xi)$ is concerned, we have by \eqref{eq:Ldef1} that 
\begin{equation}\label{eq:help2}
\mu_n((-\infty, y_n(\xi)))+y_n(\xi)\leq \xi\leq \mu_n((-\infty, y_n(\xi)])+y_n(\xi)=F_n(y_n(\xi))+y_n(\xi).
\end{equation}
To show the pointwise convergence of $y_n(\xi)$ to $y(\xi)$ for $\xi\in D^c$, we have to distinguish two cases.

\noindent
For $y_n(\xi)\leq y(\xi)$, combining \eqref{eq:help1}--\eqref{eq:help2} yields
\begin{align*}
F(y(\xi))+y(\xi)=\xi&\leq F_n(y_n(\xi))+y_n(\xi) \\
& = F_n(y(\xi))-\mu_n((y_n(\xi),y(\xi)])+y_n(\xi).
\end{align*}
 Thus 
 \begin{equation*}
 0\leq y(\xi)-y_n(\xi)+\mu_n((y_n(\xi),y(\xi)])\leq F_n(y(\xi))-F(y(\xi)).
 \end{equation*}
 
 \noindent
 For $y(\xi)< y_n(\xi)$, combining again \eqref{eq:help1}--\eqref{eq:help2} yields
 \begin{align*}
 \mu_n((-\infty, y_n(\xi)))+y_n(\xi)
 & = F_n(y(\xi))+\mu_n((y(\xi),y_n(\xi)))+y_n(\xi)\\
 & \leq\xi=F(y(\xi))+y(\xi).
 \end{align*}
 Hence
 \begin{equation*}
 0\leq y_n(\xi)-y(\xi)+\mu_n((y(\xi), y_n(\xi)))\leq F(y(\xi))-F_n(y(\xi)).
 \end{equation*}

Since $\mu_n$ and $\mu$ are positive finite Radon measures for all $n\in \Natural$, we get that 
\begin{equation}\label{eq:new}
\vert y_n(\xi)-y(\xi)\vert \leq \vert F_n(y(\xi))-F(y(\xi))\vert, \quad \xi\in D^c.
\end{equation}
Since by assumption $\xi \in D^c$, we have that $F$ is continuous
at the point $y(\xi)$, which in turn implies that
$\vert F_n(y(\xi))-F(y(\xi))\vert$ converges to zero.
Thus $y_n(\xi) \to y(\xi)$ for every $\xi \in D^c$.

For $\xi\in D$, we argue as follows. Any point $x$ at which $F$ is discontinuous in Eulerian coordinates, corresponds to a maximal interval $[\xi_1,\xi_2]$ in Lagrangian coordinates such that $y(\xi)=x$ for all $\xi\in [\xi_1,\xi_2]$ and $\xi_2-\xi_1=\mu(\{x\})$. In particular, there exists an increasing sequence $\xi_i\in D^c$ such that $\xi_i$ converges to $\xi_1$. We may write
\begin{equation*}
y_n(\xi_1)-y(\xi_1)=(y_n(\xi_1)- y_n(\xi_i))+(y_n(\xi_i)-y(\xi_i))+(y(\xi_i)-y(\xi_1)).
\end{equation*}
Because $y_n$ and $y$ are Lipschitz continuous with Lipschitz constant
at most $1$ due to \eqref{eq:Ldef1}, 
we can thus estimate
\[
\abs{y_n(\xi_1)-y(\xi_1)} \le 2\abs{\xi_i-\xi_1} + \abs{y_n(\xi_i)-y(\xi_i)}.
\]
Since $y$ is continuous at $\xi_i$ (cf.~\eqref{eq:new}), the second
term on the right-hand side tends to 0 as $n \to \infty$, which shows
that $\abs{y_n(\xi_1)-y(\xi_1)}$ can be made arbitrarily small and
thus $y_n(\xi_1) \to y(\xi_1)$. A similar argument shows that
$y_n(\xi_2)\to y(\xi_2)$ by taking a decreasing sequence $\xi_i\in
D^c$ such that $\xi_i$ converges to $\xi_2$.

We can now show that $y_n(\xi)\to y(\xi)$ for all $\xi\in [\xi_1,\xi_2]$, By definition $y_n$ is an increasing function, and $y(\xi)$ is constant on $[\xi_1,\xi_2]$. Thus 
\begin{equation*}
| y_n(\xi)-y(\xi)| \leq\max \bigl(\lvert y_n(\xi_1)-y(\xi_1)\rvert, \lvert y_n(\xi_2)-y(\xi_2)\rvert\bigr)\quad \text{ for all } \xi\in[\xi_1,\xi_2]. 
\end{equation*}
Since both $|y_n(\xi_1)-y(\xi_1)|$ and $ |y_n(\xi_2)-y(\xi_2)|$ tend to zero as $n\to\infty$, it follows immediately that $y_n(\xi)\to y(\xi)$ for all $\xi\in [\xi_1,\xi_2]$. Thus $y_n\to y$ pointwise.

{\bf Step 2. Convergence of $h_n$ to $h$  and $\zeta_{n,\xi}$ to $\zeta_\xi$ in $L^2(\Real)$.}
By definition, we have that $X_n\in \F_0$ for all $n\in \Natural$ and $X\in \F_0$. Thus $H_n(\xi)=\xi-y_n(\xi)$, $n\in \Natural$, and $H(\xi)=\xi-y(\xi)$ for almost every $\xi\in\Real$. As $y_n(\xi)$ converges pointwise to $y(\xi)$, we infer that $H_n(\xi)\to H(\xi)$ pointwise almost everywhere as $n\to\infty$. Moreover, $H_n(\xi)$, $n\in\Natural$, and $H(\xi)$ are all continuous, and hence we conclude that, actually, we have pointwise convergence of $H_n(\xi)\to H(\xi)$ for every $\xi\in\Real$. Moreover, since 
\begin{equation*}
H_n(\xi)=\int_{-\infty}^\xi h_n(\eta)d\eta\quad\text{and}\quad H(\xi)=\int_{-\infty}^\xi h(\eta)d\eta, 
\end{equation*}
$h_n$ and $h$ can be seen as positive finite Radon measures, and hence 
\begin{equation}\label{est:weakh}
\lim_{n\to\infty} \int_\Real h_n(\xi)\psi(\xi)d\xi=\int_\Real h(\xi)\psi(\xi)d\xi
\end{equation}
for all $\psi\in C_c^\infty(\Real)$
according to \cite[Props.~7.19 and 8.17]{Folland}.  If we can show that $\norm{h_n}_{L^2}\to \norm{h}_{L^2}$, \eqref{est:weakh} will remain true for all $\psi\in L^2(\Real)$ by a density argument and hence all assumptions of the Radon--Riesz theorem are satisfied. Thus $h_n\to h$ in $L^2(\Real)$, provided $\norm{h_n}_{L^2}\to \norm{h}_{L^2}$.

In order to show this convergence, observe that 
\begin{equation}\label{eq:52}
\norm{h_n}_{L^1}=\norm{H_n}_{L^\infty}=\mu_n(\Real)\to \mu(\Real)=\norm{H}_{L^\infty}=\norm{h}_{L^1} \quad \text{as }n\to\infty.
\end{equation}
Since $X_n\in\F_0$ and $X\in\F_0$, we have because of \eqref{eq:lagcoord3} that
\begin{equation}\label{eq:53}
h_n^2(\xi)=h_n(\xi)-U_{n,\xi}^2(\xi)-\bar r_n^2(\xi)\quad \text{and}\quad h^2(\xi)=h(\xi)-U^2_\xi(\xi)-\bar r^2(\xi),
\end{equation}
respectively. Moreover, let $S=\{\xi\in\Real\mid y_\xi(\xi)=0\}$ and $S_n=\{\xi\in\Real\mid y_{n,\xi}(\xi)=0\}$.  Then 
\begin{equation*}
y_{n,\xi}(\xi)=\frac{1}{1+u_{n,x}^2(y_n(\xi))+\bar \rho_n^2(y_n(\xi))}\quad \text{ for almost every }\xi \in S_n^c,
\end{equation*}
and 
\begin{equation*}
y_\xi(\xi)=\frac{1}{1+u_x^2(y(\xi))+\bar \rho^2(y(\xi))}\quad \text{ for almost every }\xi\in S^c .
\end{equation*}
Hence we get 
\begin{align*}
 \int_\Real U^2_{n,\xi}(\xi)d\xi&=\int_{S_n^c} U^2_{n,\xi}(\xi)d\xi =\int_{S_n^c} \frac{u_{n,x}^2(y_n(\xi))}{1+u_{n,x}^2(y_n(\xi))+\bar \rho_n^2(y_n(\xi))}y_{n,\xi}(\xi)d\xi\\ \nn 
& =\int_\Real \frac{u_{n,x}^2(x)}{1+u_{n,x}^2(x)+\bar\rho_n^2(x)}dx,
\end{align*}
where we used that $U_{n,\xi}(\xi)=y_{n,\xi}(\xi)=0$ for all $\xi\in S_n$ and that $\meas (y_n(S_n))=0$. Similar arguments yield
\begin{equation*}
\int_\Real U^2_\xi(\xi)d\xi=\int_\Real \frac{u_x^2(x)}{1+u_x^2(x)+\bar\rho^2(x)}dx.
\end{equation*}
Thus, according to \eqref{assump:EulerLag1},
\begin{equation}\label{eq:51}
\lim_{n\to\infty}\norm{U_{n,\xi}}_{L^2}^2= \lim_{n\to\infty} \int_\Real U_{n,\xi}^2(\xi)d\xi=\int_\Real U_{\xi}^2(\xi)d\xi=\norm{U_\xi}_{L^2}^2.
\end{equation}
Following the same argument, this time using \eqref{assump:EulerLag2}, we obtain
\begin{equation}\label{eq:50}
\lim_{n\to\infty}\norm{\bar r_n}_{L^2}^2=\norm{\bar r}_{L^2}^2.
\end{equation}
Hence combining \eqref{eq:52}--\eqref{eq:53} and \eqref{eq:51}--\eqref{eq:50} yields that $\norm{h_n}_{L^2}\to \norm{h}_{L^2}$, and, in particular, $h_n\to h$ in $L^2(\Real)$ and $y_{n,\xi}\to y_\xi$ in $L^2(\Real)$, since both $X_n$ and $X$ belong to $\F_0$. 

{\bf Step 3. Convergence of $U_{n,\xi}$ to $U_\xi$ in $L^2(\Real)$.}
In order to conclude that $U_{n,\xi}\to U_\xi$ in $L^2(\Real)$, it suffices to show, according to the Radon--Riesz theorem, that $U_{n,\xi}(\xi)\rightharpoonup U_\xi(\xi)$ since we have convergence of the corresponding norms, cf.~\eqref{eq:51}. Due to the fact that $C_c^\infty(\Real)$ is dense in $L^2(\Real)$, it suffices to show that 
\begin{equation*}
\lim_{n\to\infty}\int_\Real U_{n,\xi}(\xi)\psi(\xi)d\xi=\int_\Real U_\xi(\xi)\psi(\xi)d\xi  
\end{equation*}
for all $\psi\in C_c^\infty(\Real)$.
Observe that we have for any $\psi\in C_c^\infty(\Real)$
\begin{equation*}
\int_\Real U_{n,\xi}(\xi)\psi(\xi)d\xi  = \int_{S_n^c} U_{n,\xi}(\xi)\psi(\xi)d\xi= \int_\Real u_{n,x}(x)\psi(y_n^{-1}(x))dx.
\end{equation*}
Here $S_n=\{\xi\in\Real\mid y_{n,\xi}(\xi)=0\}$ and hence, according to \eqref{eq:lagcoord3}, $U_{n,\xi}(\xi)=0$ for almost every $\xi\in S_n$, and $y_n^{-1}(x)$ denotes the pseudo inverse to $y_n(\xi)$ defined as
\begin{equation*}
y_n^{-1}(x)=\inf\{\xi\in\Real\mid y_n(\xi)>x\}.
\end{equation*}
Similarly, 
\begin{equation*}
\int_\Real U_\xi(\xi)\psi(\xi)d\xi=\int_\Real u_x(x)\psi(y^{-1}(x))dx,
\end{equation*}
where $y^{-1}(x)$ denotes the pseudo inverse to $y(\xi)$, i.e., 
\begin{equation*}
y^{-1}(x)=\inf\{\xi\in\Real\mid y(\xi)>x\}.
\end{equation*}
Thus it suffices to show that 
\begin{equation}\label{weak:unUn}
\lim_{n\to\infty} \int_\Real u_{n,x}(x)\psi(y_n^{-1}(x))dx=\int_\Real u_x(x)\psi(y^{-1}(x))dx
\end{equation}
for all $\psi\in C_c^\infty(\Real)$.

Let $\psi\in C_c^\infty(\Real)$. By assumption, there exist $c$, $d\in\Real$ such that $\supp(\psi)\subset [c,d]$. Then $\supp(\psi\circ y^{-1})\subset [\xi_1,\xi_2]$, where 
\begin{equation*}
\xi_1=\min\{\xi\in\Real \mid y(\xi)\geq c\} \quad \text{ and } \quad \xi_2=\max\{\xi\in\Real\mid y(\xi)\leq d\}.
\end{equation*}
Since $y(\xi)+H(\xi)=\xi$ for all $\xi\in\Real$ and $H(\xi)\leq \norm{\mu}$, we have 
\begin{equation*}
c-\norm{\mu}\leq \xi_1<\xi_2\leq d+\norm{\mu}
\end{equation*}
or, in other words, $\supp(\psi\circ y^{-1})\subset [c-\norm{\mu},d+\norm{\mu}]$.
Similarly one obtains that $\supp(\psi\circ y_n^{-1})\subset [c-\norm{\mu_n},d+\norm{\mu_n}]$. In particular, cf.~\eqref{assump:EulerLag3}, there exists $N\in\mathbb{N}$ such that 
\begin{equation*}
\supp(\psi\circ y_n^{-1})\subset [c-2\norm{\mu}, d+2\norm{\mu}], \quad \text{ for all } n\geq N.
\end{equation*}
Moreover, to any $x\in\Real$ we can assign a unique $y^{-1}(x)$ and $y^{-1}_n(x)$ using the pseudo inverse to $y(\xi)$ and $y_n(\xi)$, respectively.
Thus we have from \eqref{eq:Ldef1}
\begin{equation*}
y^{-1}(x)=x+\mu((-\infty, x])=x+F(x).
\end{equation*}
and
\begin{equation*}
y_n^{-1}(x)=x+\mu_n((-\infty,x])= x+F_n(x).
\end{equation*}
In particular, 
\begin{equation*}
\vert y_n^{-1}(x)-y^{-1}(x)\vert =\vert F_n(x)-F(x)\vert.
\end{equation*}
Thus $y_n^{-1}(x)$ converges to $y^{-1}(x)$ for any $x\in\Real$ at which $F(x)$ is continuous. In particular, $y_n^{-1}(x)$ converges to $y^{-1}(x)$ for almost every $x\in\Real$, since $F(x)$ has at most countably many discontinuities. Hence, after using Lebesgue's dominated convergence theorem, we obtain that $\psi\circ y_n^{-1}\to \psi\circ y^{-1}$ in $L^2(\Real)$.
Moreover, we have by assumption that $u_{n,x}$ converges weakly to $u_x$.
Thus $u_{n,x}\, \psi\circ y_n^{-1}$ is the product of a weakly convergent sequence
and a strongly convergent sequence, which implies that its integral
converges to the integral of the limit $u_x \,\psi\circ y^{-1}$,
which in turn proves~\eqref{weak:unUn}.

{\bf Step 4. Convergence of $\bar r_n$ to $\bar r$ in $ L^2(\Real)$.}
The proof follows exactly the same lines as the one of $U_{n,\xi}\to U_\xi$ in $L^2(\Real)$ in Step 3.

{\bf Step 5. Convergence of $\zeta_n$ to $\zeta$ in $L^\infty(\Real)$.}
Since both $X_n$ and $X$ belong to $\F_0$, we have 
\begin{equation}\label{eq:58}
\vert y(\xi)-y_n(\xi)\vert =\vert H(\xi)-H_n(\xi)\vert \leq \norm{h_n-h}_{L^1}.
\end{equation}
Moreover, by \eqref{eq:53},
\begin{equation*}
h_n(\xi)-h(\xi)=h_n^2(\xi)-h^2(\xi)+U_{n,\xi}^2(\xi)-U_\xi^2(\xi)+\bar r_n^2(\xi)-\bar r^2(\xi),
\end{equation*}
which together with \eqref{eq:58} implies that $\norm{y_n-y}_{L^\infty}\to 0$ as $n\to \infty$, since $h_n\to h$, $U_{n,\xi}\to U_\xi$ and $\bar r_n\to \bar r$ in $L^2(\Real)$.

{\bf Step 6. Convergence of $U_n$ to $U$ in $L^2(\Real)$.}
For a proof we refer the interested reader to \cite[Prop.~5.1]{HR}.

{\bf Step 7. Convergence of $k_n$ to $k$.}
By definition we have 
\begin{equation*}
r(\xi)=\bar r(\xi)+ky_\xi(\xi)=\bar\rho(y(\xi))y_\xi(\xi)+ky_\xi(\xi)=\rho(y(\xi))y_\xi
\end{equation*}
and 
\begin{equation*}
r_n(\xi)=\bar r_n(\xi)+k_ny_{n,\xi}(\xi)+\bar\rho_n(y_n(\xi))y_{n,\xi}(\xi)+k_ny_{n,\xi}(\xi)=\rho_n(y_n(\xi))y_{n,\xi}(\xi).
\end{equation*}
Thus, the constants $k_n$ in Eulerian and Lagrangian coordinates coincide and the same is true for $k$, and the claim is an immediate consequence of \eqref{assump:EulerLag4}.
\end{proof}

\section{Convergence for the initial data implies convergence for the solution of the 2CH system} \label{sec:6}
Finally, we would like to turn our attention to the 2CH system. In particular, we are going to study the consequences of the results derived so far in the context of the global weak conservative solutions of the 2CH system. 

\begin{theorem}\label{thm:timegeneralapprox}
Given $(u_0,\rho_0,\mu_0)$ in $\D$, let
$(u_{n,0},\rho_{n,0},\mu_{n,0})$ in $\D$ be such that
$(u_{n,0},\rho_{n,0},\mu_{n,0}) \to (u_0,\rho_0,\mu_0)$ in the sense
of Definition~\ref{def:convD}.
Consider the weak conservative solutions $(u(t,\dott),\rho(t,\dott),\mu(t,\dott))$ and $(u_n(t,\dott),\rho_n(t,\dott),\mu_n(t,\dott))$ of the 2CH system with initial data $(u,\rho,\mu)|_{t=0}=(u_0,\rho_0,\mu_0)$ and $(u_n,\rho_n,\mu_n)|_{t=0}=(u_{n,0},\rho_{n,0},\mu_{n,0})$, respectively. Then we have, for all $t\in[0,\infty)$,
\begin{align*}
u_n(t,\dott)&\to u(t,\dott) \in L^2(\Real)\cap L^\infty(\Real),\\
u_{n,x}(t,\dott)&\rightharpoonup u_x(t,\dott),\\
\bar\rho_n(t,\dott)&\rightharpoonup \bar\rho(t,\dott),  \\
k_n&\to k,\\
\int_\Real \frac{u_{n,x}^2(t,x)}{1+u_{n,x}^2(t,x)+\bar\rho_n^2(t,x)}dx&\to\int_\Real \frac{u_x^2(t,x)}{1+u_{x}^2(t,x)+\bar\rho^2(t,x)}dx,\\
\int_\Real \frac{\bar\rho_{n}^2(t,x)}{1+u_{n,x}^2(t,x)+\bar\rho_n^2(t,x)}dx&\to\int_\Real \frac{\bar\rho^2(t,x)}{1+u_x^2(t,x)+\bar\rho^2(t,x)}dx,\\
F_n(t,x)\to F(t,x)\text{ for every } &x \text{ at which } F(t,x)\text{ is continuous}, \\
F_n(t,\infty)& \to F(t,\infty),
\end{align*}
where $F_n(t,x)=\mu_n(t,(-\infty,x])$ for all $n\in \Natural$ and
$F(t,x)=\mu(t,(-\infty,x])$.
That is, for every $t \ge 0$ we have that the sequence
$(u_n(t,\dott),\rho_n(t,\dott),\mu_n(t,\dott))$ converges to
$(u(t,\dott),\rho(t,\dott),\mu(t,\dott))$ in the sense of
Definition~\ref{def:convD}.
\end{theorem}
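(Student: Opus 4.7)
The plan is to reduce everything to a statement about the Lagrangian semigroup via the commuting diagram
\[
\D \xrightarrow{\,L\,} \F_0 \xrightarrow{\,S_t\,} \F \xrightarrow{\,\Gamma\,} \F_0 \xrightarrow{\,M\,} \D,
\]
and to chase convergence through each arrow. First, by hypothesis the initial data converges in the sense of Definition~\ref{def:convD}, so Theorem~\ref{thm:EulerLag} gives $X_{n,0}:=L((u_{n,0},\rho_{n,0},\mu_{n,0})) \to X_0:=L((u_0,\rho_0,\mu_0))$ in $E$. The payoff of passing to Lagrangian variables is that the 2CH system is there an ODE in a Banach space with a globally defined, locally Lipschitz flow $S_t\colon \F\to \F$ (see \cite{GHR4,HR}): this is a standard well-posedness result already in the literature, so I would simply quote it to obtain $X_n(t):=S_t(X_{n,0}) \to X(t):=S_t(X_0)$ in $E$ for each fixed $t\geq 0$.

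The second half is to push the convergence back through $M$. Since $L$ lands in $\F_0$ but $S_t$ in general moves us out of $\F_0$, I would compose with the canonical projection $\Gamma$ of~\eqref{eq:Gammadef}, so that $\Gamma(X_n(t))$ and $\Gamma(X(t))$ lie in $\F_0$ and $M\circ\Gamma(X_n(t)) = (u_n(t,\dott),\rho_n(t,\dott),\mu_n(t,\dott))$ is, up to relabeling, the corresponding conservative solution. The key technical point here is continuity of $\Gamma$: by \cite[Lem.~4.6]{GHR4}, $\Gamma|_{\F_\kappa}$ is continuous for each $\kappa>0$, so I need to check that the sequence $\{X_n(t)\}$ stays in a common $\F_\kappa$. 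This is routine: convergent sequences in $E$ are bounded, and the bounds $\norm{y+H-\id}_{W^{1,\infty}}$ and $\norm{(y+H)^{-1}-\id}_{W^{1,\infty}}$ are controlled uniformly by the conserved energy $\norm{h}_{L^1}$ together with the $L^\infty$ bound on $\zeta$, both of which are uniformly bounded along $X_n(t)$ for fixed $t$. Thus $\Gamma(X_n(t)) \to \Gamma(X(t))$ in $E$.

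Finally, applying Theorem~\ref{thm:LagEuler} to the sequence $\Gamma(X_n(t))\to \Gamma(X(t))$ in $\F_0$ yields the convergence $M(\Gamma(X_n(t))) \to M(\Gamma(X(t)))$ in the sense of Definition~\ref{def:convD}, which is exactly the claim of the theorem.

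The main obstacle I anticipate is the uniform-in-$n$ $\F_\kappa$ bound needed to invoke continuity of $\Gamma$: one must rule out the possibility that the relabeling $y_n(t)+H_n(t)$ has Lipschitz constant blowing up as $n\to\infty$. However, along any conservative solution the energy $\mu_n(t,\Real)$ is constant in $t$ and, by the assumption $F_n(0,\infty)\to F(0,\infty)$ in Definition~\ref{def:convD}, bounded uniformly in $n$; combined with the a~priori $L^\infty$-bound $\norm{u_n(t,\dott)}_{L^\infty}^2 \le \mu_n(t,\Real)$ and the equation $y_{n,\xi}+h_n=1-\tfrac{2}{n}\bar r_n\cdot(\text{bounded})$, this gives the required uniform $\kappa$. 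Once this is in hand, the rest is a mechanical application of the two equivalence theorems.
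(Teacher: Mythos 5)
Your plan follows the paper's proof essentially step for step: Theorem~\ref{thm:EulerLag} for the initial data, stability of the Lagrangian ODE flow (which the paper derives explicitly via the Lipschitz bounds on $P$ and $Q$, conservation of $\Sigma(X(t))=\int_\Real (U^2y_\xi+h)\,d\xi$, and Gronwall, with constants uniform in $n$ since $\norm{X_{n,0}}_E$ is bounded), a uniform-in-$n$ bound $X_n(t)\in\F_{\beta(t)}$ (for which the paper invokes \cite[Lemma~3.3]{HR}) so that the continuity of $\Gamma$ applies, and finally Theorem~\ref{thm:LagEuler}. The only inaccuracy is in your heuristic for the uniform $\F_\kappa$ bound: the identity involving $1-\tfrac{2}{n}\bar r_n$ belongs to the mollified sequence of Section~\ref{sec:3} and has no analogue for general data evolved in time, and the conserved quantity is $\Sigma$ rather than $\norm{h_n}_{L^1}$ alone, but this does not damage the argument since the cited a priori estimate supplies exactly the needed bound in terms of $t$ and $\norm{X_{n,0}}_E$.
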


\begin{proof}
Again, we are going to split the proof into several steps. 

{\bf Step 1. Convergence in Eulerian coordinates implies convergence in Lagrangian coordinates for the initial data.}
Let $X_0=(y_0,U_0,h_0,r_0)=L((u_0,\rho_0,\mu_0))$ and $X_{n,0}=(y_{n,0},U_{n,0},h_{n,0},r_{n,0})=L((u_{n,0},\rho_{n,0},\mu_{0,n}))$. Then according to Theorem~\ref{thm:EulerLag}, $X_{n,0}\to X_0$ in $E$.

{\bf Step 2. Convergence at initial time implies convergence at any later time for the solution in Lagrangian coordinates.}
Consider the following semilinear system of ordinary differential equations, which describes weak conservative solutions of the 2CH system in Lagrangian coordinates, cf.~\cite{GHR4},
\begin{subequations}\label{odesys:lag}
\begin{align}
\zeta_t& =U,\\
U_t&=-Q(X),\\
h_t& =2(U^2+\frac12 k^2-P)U_\xi,\\
\bar r_t&=-kU_\xi,\\
k_t&=0,
\end{align}
\end{subequations}
where $y(t,\xi)=\zeta(t,\xi)+\xi$, and 
\begin{align*}
P(X)(t,\xi)&=\frac14\int_\Real e^{-\vert y(t,\xi)-y(t,\eta)\vert}(2U^2y_\xi+2k\bar r+h)(t,\eta)d\eta+\frac12 k^2, \\
\intertext{and} 
Q(X)(t,\xi)&=-\frac14\int_\Real \sign(\xi-\eta)e^{-\vert y(t,\xi)-y(t,\eta)\vert}(2U^2y_\xi+2k\bar r+h)(t,\eta)d\eta.
\end{align*}
Then to $X(0)=X_0$ and $X_n(0)=X_{n,0}\in \F$, there exists a unique global solution to \eqref{odesys:lag} in $\F$, which we denote $X(t)$ and $X_n(t)$, respectively. Moreover, the mappings 
$X\mapsto P(X)-\frac12 k^2$ and $X\mapsto Q(X)$ are Lipschitz
continuous on bounded sets as mappings from $E$ to $H^1(\Real)$. In
particular, one has, cf.~\cite[Lemma 2.1]{HR}, that
\begin{equation}\label{Lip:P}
\norm{P(X_n(t))-P(X(t))}_{L^2\cap L^\infty}\leq C_{t,n}\norm{X_n(t)-X(t)}_{E},
\end{equation}
where $C_{t,n}$ is dependent on $\norm{X_n(t)}_{E}$, $\norm{X(t)}_E$. Similarly, we have 
\begin{equation}\label{Lip:Q}
\norm{Q(X_n(t))-Q(X(t))}_{L^2\cap L^\infty}\leq C_{t,n}\norm{X_{n}(t)-X(t)}_E.
\end{equation}
Furthermore, following closely the proof of \cite[Thm.~3.5]{GHR5} and \cite[Thm.~2.8]{HR}, we get from \eqref{odesys:lag} and \eqref{F0} that 
\begin{align}\nn 
\Sigma(X(t))& =\int_\Real (U^2y_\xi+h)(t,\eta)d\eta=\int_\Real (U^2y_\xi+h)(0,\eta)d\eta\\ \label{energy}
&=\int_\Real (U^2y_\xi+h^2+U_\xi^2+\bar r^2)(0,\eta)d\eta=\Sigma(X(0)),
\end{align}
and, in particular, 
\begin{equation*}
\norm{X(t)}_E\leq D(t,\Sigma(X(0)))\norm{X(0)}_E,
\end{equation*}
where $D(t,\Sigma(X(0)))$ depends on $t$ and $\Sigma(X(0))$. A similar
estimate holds for $X_n(t)$ with $n\in\Natural$. Thus $C_{t,n}$ in
\eqref{Lip:P} and \eqref{Lip:Q} only depends on $t$,
$\norm{X_n(0)}_E$, and $\norm{X(0)}_E$, due to
\eqref{energy}. Furthermore, since $X_n(0)\to X(0)$ in $E$, there
exists $M>0$ such that $\max(\norm{X_n(0)}_E,\norm{X(0)}_E)\leq M$.  
Thus, \eqref{Lip:P} and \eqref{Lip:Q} imply that the right-hand side of \eqref{odesys:lag} is Lipschitz continuous on bounded sets, and, in particular, applying Gronwall's inequality yields
\begin{equation}\label{Lip:compl}
\norm{X_n(t)-X(t)}_E\leq C_t\norm{X_n(0)-X(0)}_E,
\end{equation}
 where $C_t$ only depends on $M$ and $t$.
 
 {\bf Step 3. Convergence independent of relabeling in $\F$.} 
 As in \cite[Lemma 3.3]{HR}, one can show, given $T\geq 0$ and $\bar X(0)\in \F_0$, one has
 \begin{equation*}
 \bar X(t)\in \F_\alpha
 \end{equation*}
 for all $t\in[0,T]$, where $\alpha$ only depends on $t$ and
 $\norm{\bar X(0)}_E$. In our case, since $X_n(0)\to X(0)$ in $E$, 
 there exists $M > 0$ such that $M \ge \norm{X(0)}_E$ and $M\ge
 \norm{X_n(0)}_E$ for all $n \in \Natural$. Thus there exists
 $\beta(t)>0$ independent of $n$, such that
 \begin{equation*}
 X(t)\in \F_{\beta(t)} \quad \text{and} \quad X_n(t)\in \F_{\beta(t)}\quad  \text{ for all } n\in\Natural.
 \end{equation*} 
 Moreover, it is known, see, e.g., \cite[Lemma 4.6]{GHR4}, that for
 $\beta(t)\geq0$, the mapping $\Gamma\colon\F_{\beta(t)}\to \F_0$ with
 $X\mapsto \Gamma(X)= X\circ (y+H)^{-1}$ is continuous. Let $\tilde
 X_n(t)=\Gamma (X_n(t))$ and $\tilde X(t)=\Gamma (X(t))$. Then for each
 $t\geq 0$ the convergence $X_n(t)\to X(t)$ in $E$ implies $\tilde
 X_n(t)\to \tilde X(t)$ in $E$.
 
 {\bf Step 4. Convergence of the solutions  in Eulerian coordinates.}
 Since $\tilde X_n(t)\to \tilde X(t)$ in $E$ for all $t\geq 0$ and $\tilde X_n(t)$, $\tilde X(t)\in \F_0$ for all $t\geq 0$, applying Theorem~\ref{thm:LagEuler} finishes the proof.
\end{proof}
The next result gives the corresponding result in the case where the approximation is constructed using the mollifier. 

\begin{theorem}\label{thm:timesmoothapprox}
Given $(u_0,0,\mu_0)$ in $\D$  and let $(u_{n,0},\rho_{n,0}, \mu_{n,0})$ in $\D$ be the smooth approximation given by \eqref{smoothsequence} in Theorem~\ref{thm:approxEuler}. Consider the weak, conservative solutions  $(u(t,\dott),0,\mu(t,\dott))$ and $(u_n(t,\dott),\rho_n(t,\dott),\mu_n(t,\dott))$ of the 2CH system with initial data $(u,0,\mu)|_{t=0}=(u_0,0,\mu_0)$ and $(u_n,\rho_n,\mu_n)|_{t=0}=(u_{n,0},\rho_{n,0},\mu_{n,0})$, respectively. Then we have, for all $t\in[0,\infty)$,
\begin{align*}
u_n(t,\dott)&\to u(t,\dott) \in L^2(\Real)\cap L^\infty(\Real),\\
u_{n,x}(t,\dott)& \rightharpoonup u_x(t,\dott),\\
\bar\rho_n(t\dott) & \rightharpoonup \bar\rho(t,\dott),\\
k_n&\to 0,\\
\int_\Real \frac{u_{n,x}^2(t,x)}{1+u_{n,x}^2(t,x)+\bar\rho_n^2(t,x)}dx&\to\int_\Real \frac{u_x^2(t,x)}{1+u_{x}^2(t,x)}dx,\\
\int_\Real \frac{\bar\rho_{n}^2(t,x)}{1+u_{n,x}^2(t,x)+\bar\rho_n^2(t,x)}dx&\to0,\\
F_n(t,x)&\to F(t,x)\text{ for every } x \text{ at which } F(t,x)\text{ is continuous},\\
F_n(t,\infty)&\to F(t,\infty),
\end{align*}
where $F_n(t,x)=\mu_n(t,(-\infty,x])$ for all $n\in \Natural$ and $F(t,x)=\mu(t,(-\infty,x])$. 

Moreover, for each $n\in\Natural$, $(u_n(t,\dott),\rho_n(t,\dott),\mu_n(t,\dott))$ is a smooth solution to the 2CH system, that is, $u_n(t,x)$ and $\rho_n(t,x)$ belong to $C^\infty(\Real_{\ge 0} \times \Real)$ and $\mu_n(t,x)=\mu_{n,\rm ac}(t,x)=(u_x^2(t,x)+\bar\rho^2(t,x))dx$ for all  $t\geq 0$, and, in particular, no wave breaking occurs.
\end{theorem}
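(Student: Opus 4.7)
The plan is to reduce the theorem to a direct combination of Theorem~\ref{thm:approxEuler}, Theorem~\ref{thm:approxLagran}, Theorem~\ref{thm:LagEuler}, Theorem~\ref{thm:timegeneralapprox}, and the regularity statement from \cite[Cor.~6.2]{GHR4} already recorded in the remark following Theorem~\ref{thm:approxEuler}. The only genuinely new content lies in verifying that the smooth initial approximation converges in the sense of Definition~\ref{def:convD}, after which everything else is immediate.

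First I would upgrade the convergence of the initial data. Theorem~\ref{thm:approxEuler} by itself only gives $u_{n,0}\to u_0$ in $H^1(\Real)$ together with pointwise convergence of $F_{n,0}$ to $F_0$ at continuity points; this is weaker than Definition~\ref{def:convD}, since, for instance, the weak convergence $\bar\rho_{n,0}\rightharpoonup 0$ and the two quotient-integral conditions \eqref{assump:EulerLag1}--\eqref{assump:EulerLag2} are not manifest. To circumvent this, I take the Lagrangian detour: set $X_{n,0}=L((u_{n,0},\rho_{n,0},\mu_{n,0}))$ and $X_0=L((u_0,0,\mu_0))$. Theorem~\ref{thm:approxLagran} yields $X_{n,0}\to X_0$ in $E$, and since $X_{n,0},X_0\in\F_0$, Theorem~\ref{thm:LagEuler} translates this back to convergence of $(u_{n,0},\rho_{n,0},\mu_{n,0})$ to $(u_0,0,\mu_0)$ in the sense of Definition~\ref{def:convD}.

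With the Definition~\ref{def:convD} convergence at $t=0$ in hand, Theorem~\ref{thm:timegeneralapprox} applies verbatim and furnishes all the stated convergences at every $t\in[0,\infty)$. Specialising the limits to $\bar\rho=0$ and $k=0$ (so that $k_n=1/n\to 0$ and the $\bar\rho$-integral in the limit vanishes) gives exactly the displayed list in the theorem.

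Finally, for the smoothness and absence of wave breaking, I invoke the remark following Theorem~\ref{thm:approxEuler}: by construction $u_{n,0},\rho_{n,0}\in C^\infty(\Real)$, $\mu_{n,0}$ is absolutely continuous, and $\rho_{n,0}(x)\ge 1/n>0$ for all $x\in\Real$ thanks to \eqref{positivity}. Hence \cite[Cor.~6.2]{GHR4} guarantees that the same regularity and strict positivity persist along the flow, so that $(u_n(t,\dott),\rho_n(t,\dott))$ remains smooth with $\mu_n(t,\dott)=(u_{n,x}^2(t,\dott)+\bar\rho_n^2(t,\dott))\,dx$ for every $t\ge 0$, and no wave breaking occurs. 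There is essentially no main obstacle here; the proof is a bookkeeping exercise combining the three preceding convergence theorems, and the one place where care is needed is recognising that Theorem~\ref{thm:approxEuler} must be strengthened through the $L$--$M$ sandwich in order to feed Theorem~\ref{thm:timegeneralapprox}.
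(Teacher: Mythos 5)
Your proposal is correct and follows essentially the same route as the paper: the authors likewise upgrade the initial-data convergence via Theorem~\ref{thm:approxLagran} followed by Theorem~\ref{thm:LagEuler}, then invoke Theorem~\ref{thm:timegeneralapprox} for the time-dependent convergences, and refer to the regularity results of \cite[Sect.~6]{GHR4} for smoothness and the absence of wave breaking. Your observation that Theorem~\ref{thm:approxEuler} alone does not yield convergence in the sense of Definition~\ref{def:convD}, and that the Lagrangian detour is exactly what supplies the missing conditions, is the same key point the paper relies on.
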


\begin{proof}
Since we showed in Theorem~\ref{thm:approxLagran} that
$X_n=(y_n,U_n,h_n,r_n)$ converges to $X=(y,U,h,0)$ in $E$, and hence
according to Theorem~\ref{thm:LagEuler}, the sequence
$(u_{n,0},\rho_{n,0},\mu_{n,0})$ converges to $(u_0,0,\mu_0)$ in the
sense of Definition~\ref{def:convD}, the first part of the theorem is an immediate consequence of 
Theorem~\ref{thm:timegeneralapprox}.

As far as the smoothness of the solution $(u_n(t,\dott),\rho_n(t,\dott),\mu_n(t,\dott))$ for any $t\geq 0$ and $n\in\Natural$ is concerned, we refer the interested reader to \cite[Sect.~6]{GHR4}.
\end{proof}

\end{document}